\documentclass[12pt,reqno]{amsart}

\usepackage[a4paper,margin=2.85cm]{geometry}
\usepackage{fontspec}
\usepackage{mathtools}
\usepackage{amsfonts}
\usepackage{unicode-math}
\usepackage[english]{babel}
\usepackage{microtype}
\usepackage{enumitem}
\usepackage{xurl}
\usepackage{xcolor}
\usepackage{tikz}
\usepackage{flafter}
\usepackage{aliascnt}
\usetikzlibrary{arrows.meta,positioning,calc}
\usepackage[colorlinks=true,linkcolor=blue!45!black,citecolor=green!35!black,urlcolor=red!45!black]{hyperref}
\hypersetup{
 pdftitle={A Gap Theorem for the Mobius Cross Energy Near the Hopf Link},
 pdfauthor={Chuanhuan Li and Ronggang Li},
 pdfsubject={0905v2 manuscript revision}}
\usepackage[nameinlink,noabbrev]{cleveref}

\allowdisplaybreaks[3]
\numberwithin{equation}{section}
\setlist[itemize]{leftmargin=1.8em,itemsep=.1em,topsep=.2em}
\setlist[enumerate]{leftmargin=2.1em,itemsep=.1em,topsep=.2em}

\theoremstyle{plain}
\newtheorem{theorem}{Theorem}[section]
\newaliascnt{proposition}{theorem}
\newtheorem{proposition}[proposition]{Proposition}
\aliascntresetthe{proposition}
\newaliascnt{lemma}{theorem}
\newtheorem{lemma}[lemma]{Lemma}
\aliascntresetthe{lemma}
\newaliascnt{corollary}{theorem}

\aliascntresetthe{corollary}
\theoremstyle{definition}
\newaliascnt{definition}{theorem}
\newtheorem{definition}[definition]{Definition}
\aliascntresetthe{definition}
\theoremstyle{remark}
\newaliascnt{remark}{theorem}
\newtheorem{remark}[remark]{Remark}
\aliascntresetthe{remark}

\crefname{theorem}{theorem}{theorems}
\crefname{proposition}{proposition}{propositions}
\crefname{lemma}{lemma}{lemmas}
\crefname{corollary}{corollary}{corollaries}
\crefname{remark}{remark}{remarks}
\crefname{section}{section}{sections}

\newcommand{\Sph}{\mathbb S}
\newcommand{\R}{\mathbb R}
\newcommand{\Z}{\mathbb Z}
\newcommand{\sA}{\mathscr A}

\newcommand{\lk}{\operatorname{lk}}
\newcommand{\Mob}{\operatorname{M\ddot{o}b}}
\newcommand{\Diff}{\operatorname{Diff}}
\newcommand{\dist}{\operatorname{dist}}
\newcommand{\diam}{\operatorname{diam}}
\newcommand{\Span}{\operatorname{span}}
\newcommand{\Hom}{\operatorname{Hom}}
\newcommand{\Id}{\operatorname{Id}}
\newcommand{\tr}{\operatorname{tr}}

\newcommand{\grad}{\operatorname{grad}}

\newcommand{\dd}{\,\mathrm d}
\newcommand{\eps}{\varepsilon}

\newcommand{\inner}[2]{\left\langle #1,#2\right\rangle}
\newcommand{\Hopf}{\mathsf H}
\newcommand{\KH}{\mathcal K_{\Hopf}}

\newcommand{\EL}{\mathcal G}
\newcommand{\Graph}{\operatorname{Graph}}

\newcommand{\LH}{\mathcal L_{\Hopf}}
\newcommand{\ngraph}{\operatorname{ng}_{\Hopf}}
\newcommand{\Torus}{\mathbb T^2}
\newcommand{\Lin}{\operatorname{Lin}}

\title[A gap theorem for the M\"obius cross energy]
{A Gap Theorem for the M\"obius Cross Energy Near the Hopf Link}

\author{Chuanhuan Li}
\address{Shanghai Institute for Mathematics and Interdisciplinary Sciences (SIMIS), Shanghai 200433, China \newline
${\quad}$ Research Institute of Intelligent Complex Systems, Fudan University, Shanghai 200433, China}

\email{chli@simis.cn}

\author{Ronggang Li$^\ast$}
\address{School of Mathematics and Statistics, Nanjing University of Information Science and Technology, Nanjing, China}
\email{003705@nuist.edu.cn}

\thanks{$^{\ast}$Corresponding author}

\subjclass[2020]{Primary 53A31, 58E05; Secondary 57K10, 49Q10, }
\keywords{M\"obius cross energy, Hopf link, critical-value gap} 

\begin{document}

\begin{abstract}
We prove that the critical value $2\pi^2$ is isolated for the M\"obius cross energy of two-component links in the round three-sphere $\mathbb{S}^{3}$. Specifically, there exists $\eps_0>0$ such that any non‑split, regular $H^2$ pair of curves with disjoint images, having vanishing first variation and energy at most $2\pi^2+\eps_0$, must lie in the M\"obius‑reparametrization orbit of the standard Hopf link. 
\end{abstract}

\maketitle

\section{Introduction and main theorem}\label{sec:intro}

Set \(\Sph^1=\R/(2\pi\Z)\), and equip
\(\Sph^3=\{z\in\R^4:|z|=1\}\) with the round metric \(g_0\).
For a pair \(\Gamma=(\gamma_1,\gamma_2)\) of rectifiable
closed curves with disjoint component images, the M\"obius cross
energy is
\begin{equation}\label{eq:energy}
 E(\Gamma)
 =\int_{\gamma_1}\int_{\gamma_2}
 \frac{\dd\ell_1(x)\,\dd\ell_2(y)}{|x-y|^2},
\end{equation}
where \(|x-y|\) is the Euclidean chordal distance in \(\R^4\).
Integration along each parametrized curve counts multiplicity.
O'Hara~\cite{Ohara1991} introduced the related knot energy. Freedman, He,
and Wang~\cite{FHW1994} studied the cross interaction
\eqref{eq:energy}, which is invariant under M\"obius transformations.

A pair of disjoint compact connected sets is called \emph{non-split}
if no smooth embedded two dimensional sphere disjoint from them separates the two sets. Hopf links are non-split, and the standard Hopf link is
\begin{equation}\label{eq:Hopf}
 \Hopf=(x,y),\qquad
 x(s)=(\cos s,\sin s,0,0),\qquad
 y(t)=(0,0,\cos t,\sin t).
\end{equation}
Its components are unit-speed great circles in orthogonal
\(2\)-planes.  Since \(|x(s)-y(t)|^2=2\),
$$
 E(\Hopf)=2\pi^2.
$$
Freedman, He, and Wang~\cite{FHW1994} conjectured that this is the minimum cross
energy among all non-split two-component links.
An important step was taken by He~\cite{He2002}, who proved that a
global minimizer exists and that every such minimizer is ambiently
isotopic to the Hopf link.  Agol, Marques, and Neves subsequently
proved the sharp lower bound and classified all equality
configurations~\cite{AMN2016}.

\begin{theorem}[Agol--Marques--Neves]\label{thm:AMN}
Every non-split two-component rectifiable link in \(\Sph^3\)
satisfies
\[
 E(\Gamma)\ge2\pi^2.
\]
Equality holds precisely when a conformal transformation carries the
component images to the standard Hopf circles, each counted with
multiplicity one.
\end{theorem}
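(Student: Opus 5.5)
This is the Agol--Marques--Neves theorem, which the paper cites from \cite{AMN2016} rather than proving. Any proof plan therefore has to follow their min-max strategy; I do not see an elementary route. The overall plan is to realize $E(\Gamma)$ as the area of a Gauss-map-type family of spheres, and then appeal to min-max theory for minimal surfaces in $\Sph^3$.

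\textbf{Step 1: the Gauss map.} For a link $\Gamma=(\gamma_1,\gamma_2)$, first approximate it by smooth links; lower semicontinuity of $E$ lets us assume smoothness. Stereographically project to $\R^3$ and consider the Gauss map
\[
G:\Sph^1\times\Sph^1\to\Sph^2,\qquad G(s,t)=\frac{\gamma_1(s)-\gamma_2(t)}{|\gamma_1(s)-\gamma_2(t)|}.
\]
Its degree is the linking number $\lk(\gamma_1,\gamma_2)$. The Jacobian of $G$ is pointwise bounded by the cross-energy density. This gives $4\pi|\lk|\le E(\Gamma)$, which is not sharp.

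\textbf{Step 2: a five-parameter family of spheres.} To get the sharp bound, I would attach to each point of $\Sph^1\times\Sph^1\times(0,\infty)$, or more precisely to its compactification by conformal parameters, a round sphere in $\Sph^3$. Concretely, take the boundary of the ball of radius $\lambda$ about $\gamma_1(s)-\gamma_2(t)$ in the $(\R^4)$-model, transported back to $\Sph^3$, and combine this with the conformal group to get a $5$-parameter family $\{\Sigma_v\}$ of surfaces. The key computation is
\[
\sup_v \operatorname{area}(\Sigma_v)\le E(\Gamma),
\]
obtained by integrating over the family of tori swept out by the link, which are the Clifford-torus analogues. The family must also be arranged so that its boundary behaviour degenerates to point spheres in the conformally correct way. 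The non-split hypothesis is what makes this family homologically nontrivial, through the degree computation from Step 1 together with the Almgren isomorphism. It is cleaner to use non-split directly via an Alexander-duality argument: if $\lk=0$ but the link is non-split, a nontrivial class is still detected.

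\textbf{Step 3: min-max and the rigidity case.} Step 2 gives a nontrivial 5-sweepout of $\Sph^3$. By the Marques--Neves proof of the Willmore conjecture, the width of such sweepouts is at least $2\pi^2$, the area of the Clifford torus; this step uses the index-$5$ characterization of the Clifford torus and Urbano's theorem. Combining with Step 2 yields $E(\Gamma)\ge 2\pi^2$.

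For the equality case, equality forces every inequality to be sharp. Sharpness in the Jacobian bound forces the two components to lie on orthogonal circles after a conformal transformation, each covered once. I expect the main obstacle to be Step 2: building the family so that it is a genuine nontrivial sweepout, and controlling its boundary degenerations for merely rectifiable curves, is the technically delicate part.
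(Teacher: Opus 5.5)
Your outline has the right overall shape: an area bound by $E(\Gamma)$ for a five-parameter sweepout of $\Sph^3$, followed by the Marques--Neves min-max theorem with Urbano's classification. But the step that carries all the content is not there, and your topological step is false. Note that the paper gives no proof here; it cites \cite{AMN2016}, so the comparison below is with that argument as I recall it.

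\textbf{The sweepout.} Step~2 does not produce a family whose areas are controlled by $E(\Gamma)$. Round spheres centred at $\gamma_1(s)-\gamma_2(t)$, and their conformal images, have no relation to the energy density. Moreover, every round sphere in $\Sph^3$ has area at most $4\pi<2\pi^2$, so such a family can never be a sweepout of width $2\pi^2$. The members have to be tori, and the missing idea is already hidden in your Step~1: do not project to $\R^3$. Keep the link in $\Sph^3\subset\R^4$ and use
\[
 G(s,t)=\frac{\gamma_1(s)-\gamma_2(t)}{|\gamma_1(s)-\gamma_2(t)|}\in\Sph^3,
 \qquad
 |\partial_sG\wedge\partial_tG|\le\frac{|\gamma_1'(s)|\,|\gamma_2'(t)|}{|\gamma_1(s)-\gamma_2(t)|^2}.
\]
The inequality holds because $\partial_sG$ and $-\partial_tG$ are the components of $\gamma_1'$ and $\gamma_2'$ orthogonal to $G$, divided by $|\gamma_1-\gamma_2|$. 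So the torus $G$ in $\Sph^3$ has area at most $E(\Gamma)$. For $\Hopf$ it is the Clifford torus $\tfrac1{\sqrt2}(x(s)-y(t))$, and equality holds. Since $E$ is invariant under M\"obius transformations of $\R^4\cup\{\infty\}$, the Gauss tori of M\"obius images of $\Gamma$ all have area at most $E(\Gamma)$. The Agol--Marques--Neves family is built from such tori. Further energy-nonincreasing deformations are available, e.g.\ $(\gamma_1,\lambda\gamma_2)$, since $\lambda|x-\lambda y|^{-2}\le|x-y|^{-2}$ for unit $x,y$. Your $\Sph^2$-valued map is the degenerate case of a link lying in a hyperplane: there $G$ lands in a great sphere, and only the bound $4\pi|\lk|$ survives.

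\textbf{Non-split versus linking.} The nontriviality of this sweepout is a degree computation in which the linking number appears. So min-max yields $E\ge2\pi^2$ only when $\lk\ne0$, and effectively only when $|\lk|=1$, since otherwise $E\ge4\pi|\lk|\ge8\pi$ anyway. Your Alexander-duality remedy for $\lk=0$ cannot work. Alexander duality gives $H_1(\Sph^3\setminus\gamma_2;\Z)\cong\Z$, and the class of $\gamma_1$ there is exactly $\lk(\gamma_1,\gamma_2)$, so for the Whitehead link it detects nothing. The non-split case is instead reduced to $|\lk|=1$ by He's theorem \cite{He2002}: a minimizer among non-split links exists and is isotopic to the Hopf link. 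Applying the $|\lk|=1$ bound to that minimizer bounds the infimum. A non-split link with $E=2\pi^2$ is itself a minimizer, hence has $|\lk|=1$, so the rigidity statement applies to it.

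Two smaller points. First, lower semicontinuity runs the wrong way for your smoothing step: you need $\lk$-preserving approximants with $E(\Gamma_n)\to E(\Gamma)$, which positive separation of the components provides. Second, for the equality case, ``every inequality is sharp'' has no content for a supremum over a family. You must first invoke the rigidity half of min-max: an optimal sweepout contains a Clifford torus as one of its members. Only for that member does the Jacobian bound become an equality, which forces $\gamma_1'\perp\gamma_2'$, with both orthogonal to $\gamma_1-\gamma_2$, for the corresponding transformed link.
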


For a pair of curves \(\widehat\Gamma=(\widehat\gamma_1,\widehat\gamma_2)\) in $\mathbb{R}^3$, the Gauss linking number is defined by 
$$
\lk(\widehat\Gamma)
=\frac{1}{4\pi}
\int_{\Sph^1}\int_{\Sph^1}
\frac{
\det\bigl(\widehat\gamma_1'(s),\widehat\gamma_2'(t),
\widehat\gamma_1(s)-\widehat\gamma_2(t)\bigr)}
{|\widehat\gamma_1(s)-\widehat\gamma_2(t)|^3}
\,\dd s\,\dd t.
$$
It is conformal invariant, therefore the linking number of a pair of curves $\Gamma$ in $\Sph^3$ is defined to be that of its stereographic projection in $\mathbb{R}^3$. A nonzero linking number implies non-splitness, but the converse fails, as illustrated by the Whitehead link. A direct calculus implies
$$
E(\Gamma)\geq 4\pi|\lk(\Gamma)|.
$$
Allowing a choice of component orientations, He ~\cite{He2002} shows that a pair of non-split curves is link-homotopic to the Hopf link provided $E(\Gamma)<8\pi$ .

As a continuation of the study of He, Agol, Marques, and Neves \cite{He2002, AMN2016},and motivated by Mondino and Nguyen \cite{MN14}, we study critical points rather than minimizers of the cross energy.  The equality case in \Cref{thm:AMN} characterizes the minimizers, but does not rule out non-minimizing critical pairs with energy arbitrarily close to \(2\pi^2\). Our main result shows that such critical pairs do not exist.  

To state the result precisely, we fix the following notation.

\begin{definition} A closed curve $\gamma $ is \emph{regular} if \(|\gamma'|>0\) everywhere, we fix the extrinsic Sobolev space
\[
 H^2(\Sph^1,\Sph^3)
 =\left\{\gamma\in H^2(\Sph^1,\R^4):
          |\gamma|=1\ \text{a.e.}\right\}.
\]

\begin{enumerate}
\renewcommand{\labelenumi}{\textup{(\roman{enumi})}}

\item The space of pairs we study is
$$
\sA=\left\{(\gamma_1,\gamma_2):
 \begin{array}{l}
 \gamma_1 , \gamma_2\in H^2(\Sph^1,\Sph^3) \text{ are regular curves}  \text{ s.t. }
 \gamma_1(\Sph^1)\cap\gamma_2(\Sph^1)=\varnothing
 \end{array}\right\}.
 $$

\item A pair is \emph{weakly critical} if the first variation \(\delta E_\Gamma=0\) on every \(H^2\) tangent variation.  

\item Let \(\Diff^2(\Sph^1)\) denote the \(H^2\) circle maps that are
\(C^1\) diffeomorphisms, allowing both orientations, and set
\begin{equation}\label{eq:G}
 {\rm G}
 =\Mob^+(\Sph^3)\times\Diff^2(\Sph^1)\times\Diff^2(\Sph^1).
\end{equation}
We write \({\rm G}\cdot\Hopf\) for the set of pairs obtained from
\(\Hopf\) by a common orientation-preserving M\"obius
transformation and separate reparametrizations.
\end{enumerate}
\end{definition}

We can now state our main gap theorem.
\begin{theorem}\label{thm:main}
There exists \(\eps_0>0\) such that every non-split, weakly
critical pair \(\Gamma\in\sA\) satisfying
\[
 E(\Gamma)\le2\pi^2+\eps_0
\]
belongs to \({\rm G}\cdot\Hopf\).
In particular, \(E(\Gamma)=2\pi^2\).
\end{theorem}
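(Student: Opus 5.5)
We argue by contradiction and compactness, combined with a local rigidity (Łojasiewicz-type / nondegeneracy modulo symmetries) statement near the Hopf orbit. Suppose there were non-split weakly critical pairs $\Gamma_k\in\sA$ with $E(\Gamma_k)\to 2\pi^2$ but $\Gamma_k\notin{\rm G}\cdot\Hopf$.

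\textbf{Step 1: normalization.} Using the $\Diff^2(\Sph^1)$ factors we reparametrize each component proportionally to arclength. The noncompact group $\Mob^+(\Sph^3)$ is handled by a centering condition, e.g.\ requiring that the measure $\dd\ell_1+\dd\ell_2$, suitably weighted, has conformal barycenter at the origin. The energy bound $E\le 2\pi^2+\eps_0$ together with non-splitness excludes concentration of either component (the energy is invariant, and collapse of one curve to a point would force splitting or infinite energy).

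\textbf{Step 2: regularity and compactness.} The Euler--Lagrange equation for weak critical points of the cross energy takes the form
\[
\kappa_{\gamma_1}(x)=\text{(smooth integral kernel over }\gamma_2),
\]
i.e.\ the curvature of $\gamma_1$ equals an integral against $\gamma_2$ with a kernel that is smooth because the images are disjoint, and symmetrically for $\gamma_2$. We therefore need a uniform lower bound on $\dist(\gamma_1,\gamma_2)$. We would derive it from the energy bound: a close approach of the two components creates energy surplus beyond $2\pi^2$ by the AMN rigidity, using a quantitative stability argument via lower semicontinuity. With this separation, bootstrapping gives uniform $C^k$ bounds. Arzel\`a--Ascoli then yields subsequential convergence in $C^k$ to a pair $\Gamma_\infty$. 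This limit is non-split, because linking or non-splitness is preserved under uniform convergence with separated images, and it has $E(\Gamma_\infty)\le 2\pi^2$. By \Cref{thm:AMN}, $\Gamma_\infty\in{\rm G}\cdot\Hopf$, and after composing with a group element we may take $\Gamma_\infty=\Hopf$.

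\textbf{Step 3: local rigidity.} We write $\Gamma_k$ as a normal graph over $\Hopf$ via sections $(u_k,v_k)$ of the normal bundles, in a slice transverse to the ${\rm G}$-orbit. The key claim is that the second variation of $E$ at $\Hopf$ is nondegenerate on the slice, i.e.\ its kernel equals the tangent space of the Möbius orbit: infinitesimal conformal motions, a 10-dimensional space, together with reparametrizations. We would compute the Hessian explicitly by Fourier analysis, exploiting the $\mathrm{U}(1)\times \mathrm{U}(1)$ symmetry of $\Hopf$ (the kernel $|x-y|^{-2}$ expands nicely since $|x(s)-y(t)|^2=2$ on $\Hopf$). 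Each Fourier mode $(m,n)$ of $(u,v)$ then decouples, and we verify that the eigenvalues are positive except on modes coming from Möbius fields. Given this nondegeneracy, the implicit function theorem applied to the gradient map on the slice shows that $\Hopf$ is the only critical point in a neighborhood, contradicting $\Gamma_k\to\Hopf$ with $\Gamma_k$ critical and outside the orbit.

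The main obstacle is this nondegeneracy computation. The cross energy is nonlocal and the Hessian mixes the two components. If genuine non-Möbius Jacobi fields appeared, we would instead need the analyticity of $E$ and a Łojasiewicz--Simon inequality, giving $E(\Gamma_k)=2\pi^2$ for critical points near $\Hopf$. In that case the equality part of \Cref{thm:AMN} still forces $\Gamma_k\in{\rm G}\cdot\Hopf$. This fallback suffices for the conclusion regardless of the kernel's size, so we would present the Łojasiewicz route as the robust argument. The separation estimate in Step 2 is the secondary difficulty.
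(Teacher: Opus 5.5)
Your Step 3 matches the paper's route. The paper finds the $8$-dimensional normal kernel $\KH$ (the $10$-dimensional M\"obius algebra minus the $2$-dimensional stabilizer acting by reparametrization), builds a slice by the implicit function theorem, and uses coercivity on $\KH^\perp$ plus a quadratic remainder to force $v=0$. Your \L{}ojasiewicz fallback combined with the equality case of \Cref{thm:AMN} is also a legitimate way to avoid the kernel computation.

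\textbf{The gap is in Steps 1--2.} Both claims you rely on there are false in your gauge. Take the M\"obius image of $\Hopf$ that, in the stereographic chart, is the $z$-axis together with a round circle of radius $\lambda$ centred on it in the $xy$-plane. For every $\lambda$ this pair is non-split and has energy exactly $2\pi^2$. Yet on $\Sph^3$ one component has size $O(\lambda)$, and $\dist(\gamma_1,\gamma_2)=O(\lambda)$.
\begin{itemize}
\item Its combined length measure $\dd\ell_1+\dd\ell_2$ is dominated by the great circle. So its conformal barycenter lies $O(\lambda)$ from the origin. Recentring uses a M\"obius map with conformal factor $1+O(\lambda)$, so the centred representative still has a collapsing component and vanishing separation.
\item You give no argument that another weighting avoids this.
\item Your justification that collapse ``would force splitting or infinite energy'' is refuted by this family.
\item For the same reason, no energy-surplus argument, via AMN stability or otherwise, can yield a gauge-independent separation bound.
\end{itemize}
Nothing in your Steps 1--2 rules out $\Gamma_k$ degenerating this way. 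If it does, the constant-speed limit has a constant component, is not in $\sA$, and \Cref{thm:AMN} says nothing about it.

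\textbf{What is missing} is a normalization that uses non-splitness to bound \emph{both} lengths from below; after that, separation is elementary.
\begin{itemize}
\item \emph{Normalization.} The paper sends a point of $\gamma_1$ to the north pole. It then translates and dilates in the chart so that $0\in\sigma(\gamma_2)$ and $\diam\sigma(\gamma_2)=1$.
\item \emph{Length bounds.} Non-splitness forces $\gamma_1$ to meet $\sigma^{-1}(\overline B_1(0))$. Hence the spherical diameters are at least $\pi/2$ and $\pi/3$, which gives $L(\gamma_i)\ge 2\pi/3$. The inequality $E\ge L(\gamma_1)L(\gamma_2)/4$ then bounds both lengths from above.
\item \emph{Separation.} Take a closest pair at distance $d$ and put $\ell=2\pi/3$. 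On the arclength square $[0,\ell]^2$ one has $|\gamma_1(s)-\gamma_2(t)|\le d+s+t$. Therefore $E\ge\log\frac{(d+\ell)^2}{d(d+2\ell)}$, which diverges as $d\to0$.
\end{itemize}
This gives a uniform $d_\Lambda$ without any appeal to AMN. \Cref{thm:AMN} is needed only afterwards, to identify the limit, including the multiplicity count $2\pi^2|m_1m_2|=2\pi^2$.
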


\begin{remark}
For fixed $\gamma_2$, set
\[
 \rho_2(x)=\int_{\gamma_2}\frac{\dd\ell_2(y)}{|x-y|^2}.
\]
Then $E(\gamma_1,\gamma_2)$ is the length of $\gamma_1$ in the conformal
metric $\rho_2^2g_0$ on $\Sph^3\setminus\gamma_2$; see~\cite{FHW1994}.
Thus a critical pair consists of unparametrized closed geodesics in the
metrics generated by the opposite components. We derive the corresponding
equation in \Cref{sec:first}.
\end{remark}

Exact criticality is essential here, since non-Möbius perturbations of the Hopf link can have energies arbitrarily close to $2\pi^2$. Although we work with regular $H^2$ pairs, the critical equation implies smoothness after constant-speed reparametrization.

${}$

\paragraph{\bf Outline of the paper.}
Section \ref{sec:first} derives the first variation and the Euler-Lagrange equation for weakly critical pairs. Section \ref{sec:global} establishes a M\"obius normalization, proves regularity of weakly critical pairs, and develops the compactness theory near the minimum energy. In Section \ref{sec:Hopf-Hessian}, we compute the second variation at the Hopf link, identify its kernel, and establish transverse coercivity. Section \ref{sec:local} develops the nonlinear estimates and proves the renormalization lemma. Finally, in Section \ref{sec:gap}, we combine the transverse analysis of Section \ref{sec:Hopf-Hessian} with the renormalization argument of Section \ref{sec:local} to prove the local gap theorem, and then use the compactness result of Section \ref{sec:global} to complete the proof of the main theorem.

${}$

\paragraph{\textbf{Acknowledgments.}}
The authors grateful to Professor Jian Ge for useful discussions. They also thank Xingwang Xu, Xingdong Tang and Jie Zhou for their interest in this problem. The first author is supported by the China Postdoctoral Science Foundation (Grant No.~2026M793350).  The second author is supported by the Open Project Program of the Key Laboratory of Mathematics and Complex System at Beijing Normal University (Grant No.~202501). The principal arguments and the core of the paper were developed by the second author while visiting SIMIS.

${}$

\paragraph{\bf AI assistance declaration:}
The main ideas and most proofs are solely the work of the authors. ChatGPT (version 5.6sol) was used only to assist in proving Proposition \ref{prop:gauge} and to aid in manuscript preparation. The authors have verified all contributions and assume complete accountability for the paper.

\section{First variation and the Euler–Lagrange equation}\label{sec:first}
In this section, we establish the basic preliminaries for the M\"obius cross energy and derive its first variation and Euler–Lagrange equation.

We denote $D$ the Euclidean connection on $\mathbb{R}^{4}$ and $\nabla$ the Levi-Civita connection of $g_0$ on $\mathbb{S}^{3}$. Let $X,Y$ be tangent vector fields on $\mathbb{S}^{3}$, then for each $x\in \mathbb{S}^{3}$, the Gauss formula gives
\begin{align}\label{eq:Gauss}
    D_{X}Y=\nabla_{X}Y+ II(X,Y),\quad II(X,Y)=-\langle X,Y\rangle_{g_{0}} x,
\end{align}
where $II$ is the second fundamental form of $\mathbb{S}^{3}\subset\mathbb{R}^{4}$.

Thus, for a unit-speed curve, $\gamma''=\kappa-\gamma$, where
$\kappa=\nabla_{\gamma'}{\gamma'}$ is its curvature vector in $\Sph^3$. 

For a variation $\Gamma_\tau$ with initial velocity $\xi$, we write
$\delta E_\Gamma[\xi]=\left.\frac{\mathrm d}{\mathrm d\tau}E(\Gamma_\tau)\right|_{\tau=0}$ and  $\delta^2$ for the second differential in a fixed chart. Ordinary coordinate derivatives are denoted by $\partial$ or $\mathrm d$. The parameter torus is $\Torus=\Sph^1_s\times\Sph^1_t$.  Throughout the paper,
\begin{align}
 \iint F(s,t)\dd s\dd t
 :=\int_0^{2\pi}\int_0^{2\pi}F(s,t)\dd s\dd t \notag
\end{align}
always refers to this fixed torus with the product parameter measure.

\begin{lemma}\label{lem:constant-speed}
Every regular closed $H^2$ curve admits a constant-speed parametrization obtained by an orientation-preserving $H^2$ circle diffeomorphism.  Separate such reparametrizations preserve the cross energy, non-splitness, and weak criticality.
\end{lemma}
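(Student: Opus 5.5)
We construct the reparametrization from the arclength function and then check the three invariance claims one at a time. Let $\gamma\in H^2(\Sph^1,\Sph^3)$ be regular, with length $L=\int_0^{2\pi}|\gamma'(u)|\dd u$.

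\textbf{Step 1: regularity of the speed.} Since $\gamma\in H^2\subset C^{1,1/2}$, the derivative $\gamma'$ is continuous. It is nonvanishing by regularity, so $|\gamma'|\ge c>0$ on the compact circle. The map $v\mapsto|v|$ is smooth away from $0$, so $|\gamma'|\in H^1$ with
\[
 (|\gamma'|)'=\frac{\inner{\gamma'}{\gamma''}}{|\gamma'|}\in L^2 .
\]

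\textbf{Step 2: construction of the reparametrization.} Define
\[
 \sigma(u)=\frac{2\pi}{L}\int_0^u|\gamma'(r)|\dd r .
\]
Then $\sigma'=\frac{2\pi}{L}|\gamma'|\in H^1$ and $\sigma'\ge 2\pi c/L>0$. Since $\sigma(u+2\pi)=\sigma(u)+2\pi$, the map $\sigma$ descends to an orientation-preserving $C^1$ circle diffeomorphism of class $H^2$. Its inverse $\phi=\sigma^{-1}$ satisfies $\phi'=1/(\sigma'\circ\phi)$. This is $C^0$ and bounded below, and
\[
 \phi''=-\frac{(\sigma''\circ\phi)\,\phi'}{(\sigma'\circ\phi)^2}\in L^2,
\]
where the change of variables is valid because $\phi$ is bi-Lipschitz. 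Hence $\phi\in\Diff^2(\Sph^1)$.

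\textbf{Step 3: the reparametrized curve is $H^2$ with constant speed.} Set $\tilde\gamma=\gamma\circ\phi$. Then $\tilde\gamma'=(\gamma'\circ\phi)\phi'$ and
\[
 \tilde\gamma''=(\gamma''\circ\phi)\phi'^2+(\gamma'\circ\phi)\phi''.
\]
Both terms lie in $L^2$, again by the bi-Lipschitz change of variables and the boundedness of $\phi'$ and $\gamma'$. Moreover $|\tilde\gamma'|=L/2\pi$ is constant. This step, checking that $H^2$ is stable under composition with $\Diff^2$ and under inversion, is the only point needing care, and the one-dimensional chain rule handles it.

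\textbf{Step 4: invariance of energy and non-splitness.} The energy \eqref{eq:energy} is an integral against the arclength measures. For any $C^1$ diffeomorphisms $\phi_1,\phi_2$ of either orientation, $|(\gamma_i\circ\phi_i)'|=|\gamma_i'\circ\phi_i|\,|\phi_i'|$, so a change of variables in each factor gives $E(\gamma_1\circ\phi_1,\gamma_2\circ\phi_2)=E(\gamma_1,\gamma_2)$. Non-splitness depends only on the images, which are unchanged. Regularity and disjointness of the images are also preserved, so the reparametrized pair stays in $\sA$.

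\textbf{Step 5: invariance of weak criticality.} Composition $\xi\mapsto(\xi_1\circ\phi_1,\xi_2\circ\phi_2)$ is a linear bijection from $H^2$ tangent variations along $\Gamma$ onto those along $\Gamma\circ\phi$. Tangency to $\Sph^3$ is a pointwise condition, so it is preserved, and the inverse map is composition with $\phi_i^{-1}$. Given a variation $\Gamma_\tau$ with velocity $\xi$, the family $\Gamma_\tau\circ\phi$ has velocity $\xi\circ\phi$, and by Step 4
\[
 E(\Gamma_\tau\circ\phi)=E(\Gamma_\tau)\quad\text{for all }\tau .
\]
Differentiating at $\tau=0$ gives $\delta E_{\Gamma\circ\phi}[\xi\circ\phi]=\delta E_\Gamma[\xi]$. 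Hence $\delta E_\Gamma$ vanishes on all $H^2$ tangent variations if and only if $\delta E_{\Gamma\circ\phi}$ does.
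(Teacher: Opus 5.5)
Your proposal is correct and takes essentially the same route as the paper. Both use the normalized arclength $\sigma$ (the paper's $\theta$), $H^2$ regularity of its inverse via the bi-Lipschitz change of variables, energy and non-splitness invariance by changing variables in each integral and noting the images are unchanged, and weak criticality by pulling back variations through the fixed diffeomorphisms; the only difference is that you write out the chain-rule formulas for $\phi''$, $\tilde\gamma''$ and the bijection of $H^2$ tangent fields, which the paper leaves implicit.
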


\begin{proof}
For a lift of the parameter to $[0,2\pi]$, set
\[
 \theta(s)=\frac{2\pi}{L(\gamma)}\int_0^s|\gamma'(r)|\dd r, \qquad s\in [0,2\pi].
\]
Since $\gamma'\in H^1\hookrightarrow C^0$ and $\gamma$ is a regular closed curve,
composition with the Euclidean norm gives $|\gamma'|\in H^1$. Consequently $\theta\in H^2$, and $\theta'=2\pi|\gamma'|/L(\gamma)$ is continuous and bounded below by a positive constant.  The identity
$\theta(s+2\pi)=\theta(s)+2\pi$ makes $\theta$ a degree-one $C^1$ circle diffeomorphism.  Denote its inverse by $\psi=\theta^{-1}$,  
change of variables gives $\psi\in H^2$.
Thus we can define $\bar\gamma:=\gamma\circ\psi$ satisfies $\bar\gamma\in H^2$ and
$|\bar\gamma'|=L(\gamma)/(2\pi)$.

Changing variables separately in each of the two integrals proves the energy invariance, since the curve images remain unchanged, the non‑split property is also preserved. 
Composition by each fixed $H^2$ diffeomorphism preserves $H^2$ tangent fields.  Pulling back a variation and differentiating energy invariance proves preservation of weak criticality in both directions.  A fixed smooth M\"obius transformation preserves weak criticality by the same argument.
\end{proof}

Next, we derive the first variation in the spherical model.
Let $\Gamma=(\gamma_{1},\gamma_{2})\in\sA$ be a regular $H^{2}$ pair with disjoint component images in $\mathbf{S}^{3}$. 
Write
$T_i=\partial_{\ell_i}\gamma_i$ and $\kappa_i=\nabla_{T_i}T_i$.
The projection onto the curve-normal plane inside $T_{\gamma_i}\Sph^3$ is
\begin{equation}\label{eq:normal-projection}
 P_{N_i}=\Id-\gamma_i\otimes\gamma_i-T_i\otimes T_i.
\end{equation}
Define the positive interaction potentials by
\begin{equation}\label{eq:potentials}
 \rho_1(y)=\int_{\gamma_1}\frac{\dd\ell_1(x)}{|x-y|^2},\qquad
 \rho_2(x)=\int_{\gamma_2}\frac{\dd\ell_2(y)}{|x-y|^2}.
\end{equation}
For $r=|x-y|$ and $K(x,y)=r^{-2}$, 
spherical differentiation yields
\begin{equation}\label{eq:kernel-gradient}
 \grad_xK=\frac{2(y-\inner{x}{y}x)}{r^4},\qquad
 \grad_yK=\frac{2(x-\inner{x}{y}y)}{r^4}.
\end{equation}
In particular, $P_{N_1}\grad_xK=2P_{N_1}y/r^4$, and similarly for the second component.  
\begin{proposition}\label{prop:first-variation}
For every $\Gamma\in\sA$, define the $L^2$ normal fields
\[
 F_1=P_{N_1}\grad\rho_2-\rho_2\kappa_1,\qquad
 F_2=P_{N_2}\grad\rho_1-\rho_1\kappa_2.
\]
For every $H^1$ sphere-tangent field $\xi=(\xi_1,\xi_2)$, the extended first variation is
\begin{equation}\label{eq:first-variation}
 \delta E_\Gamma[\xi]
 =\int_{\gamma_1}\inner{F_1}{\xi_1}\dd\ell_1
 +\int_{\gamma_2}\inner{F_2}{\xi_2}\dd\ell_2.
\end{equation}
Consequently, weak criticality is equivalent to the Euler–Lagrange equations
\begin{equation}\label{eq:EL}
 \kappa_1=P_{N_1}\grad\log\rho_2,\qquad
 \kappa_2=P_{N_2}\grad\log\rho_1
\end{equation}
almost everywhere.  

\end{proposition}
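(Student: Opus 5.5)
By \Cref{lem:constant-speed} I will first reduce to the case where both $\gamma_1$ and $\gamma_2$ have constant speed. Weak criticality and the energy are preserved under this reparametrization, and the $L^2$ pairing in \eqref{eq:first-variation} is written in arclength, so it is also unaffected. Since $E$ is additive in the two components through the symmetric kernel, I will compute the variation in $\xi_1$ with $\gamma_2$ frozen and get the $\xi_2$ term by symmetry. For the $\xi_1$ part I write
\[
E(\Gamma)=\int_{\Sph^1}\rho_2(\gamma_1(s))\,|\gamma_1'(s)|\dd s .
\]
For a sphere-tangent $H^1$ field $\xi_1$, take the variation $\gamma_{1,\tau}=(\gamma_1+\tau\xi_1)/|\gamma_1+\tau\xi_1|$. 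This is an $H^1$ curve for small $\tau$ (an $H^2$ one if $\xi_1\in H^2$), its images stay disjoint from $\gamma_2$ because of the uniform positive distance, and its initial velocity is $\xi_1$.

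The differentiation proceeds in two steps.

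\textbf{Step 1.} Because $\gamma_1(\Sph^1)$ and $\gamma_2(\Sph^1)$ have positive distance, $\rho_2$ is smooth on a neighbourhood of $\gamma_1(\Sph^1)$. So I can differentiate under the integral to get
\[
\frac{\dd}{\dd\tau}\Big|_{0}\int\rho_2(\gamma_{1,\tau})|\gamma_{1,\tau}'|\dd s=\int\bigl(\inner{\grad\rho_2}{\xi_1}|\gamma_1'|+\rho_2\inner{T_1}{\xi_1'}\bigr)\dd s .
\]
Dominated convergence is justified by $\xi_1\in H^1\subset C^0$ and $|\gamma_1'|\ge c>0$. The spherical gradient appears because $\xi_1$ is tangent to $\Sph^3$; \eqref{eq:kernel-gradient} gives its explicit form.

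\textbf{Step 2.} I integrate the second term by parts in arclength: $\int\rho_2\inner{T_1}{\partial_\ell\xi_1}\dd\ell=-\int\inner{\partial_\ell(\rho_2T_1)}{\xi_1}\dd\ell$. This is legitimate since $\rho_2\circ\gamma_1\in H^2$, $T_1\in H^1$ and $\xi_1\in H^1$. Then $\partial_\ell(\rho_2 T_1)=\inner{\grad\rho_2}{T_1}T_1+\rho_2(\kappa_1-\gamma_1)$ by the Gauss formula \eqref{eq:Gauss}. Because $\xi_1\perp\gamma_1$, the $\gamma_1$ term drops. Collecting terms gives the integrand
\[
\inner{\grad\rho_2-\inner{\grad\rho_2}{T_1}T_1-\rho_2\kappa_1}{\xi_1}.
\]
Since $\grad\rho_2$ is sphere-tangent, the first two terms equal $P_{N_1}\grad\rho_2$. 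Moreover $\kappa_1\perp T_1$ (differentiate $|T_1|=1$) and $\kappa_1\perp\gamma_1$, so $F_1$ is normal. It lies in $L^2$ because $\kappa_1\in L^2$ when $\gamma_1\in H^2$ has constant speed. This proves \eqref{eq:first-variation}.

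For the equivalence with \eqref{eq:EL}: if \eqref{eq:EL} holds a.e., then $F_i=0$ and weak criticality follows. Conversely, suppose $\delta E_\Gamma=0$ on all $H^2$ tangent fields. Then \eqref{eq:first-variation} restricted to fields $\xi_1=P_{N_1}\phi$, where $\phi$ is a smooth $\R^4$-valued function and $P_{N_1}$ has $H^1$ coefficients, shows that $\int\inner{F_1}{\phi}\dd\ell=0$.

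\textbf{Main obstacle.} The delicate point is that these test fields are only $H^1$, not $H^2$. I will handle this by proving \eqref{eq:first-variation} for $H^1$ fields first and then approximating by $H^2$ fields. The pairing $\xi\mapsto\int\inner{F_1}{\xi}$ is continuous in $H^1$, indeed in $L^2$, and the smooth sphere-tangent fields $\phi-\inner{\phi}{\gamma_1}\gamma_1$ mollified are dense. The remaining care is the uniform-in-$\tau$ domination in Step 1 for $H^1$ variations. Given this density, $F_1=0$ a.e. Since $\rho_2>0$ on $\gamma_1$, dividing by $\rho_2$ gives $\kappa_1=P_{N_1}\grad\log\rho_2$, and the same argument gives the equation for $\kappa_2$.
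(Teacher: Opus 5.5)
Your proposal is correct and follows the paper's proof. Both differentiate $\int_{\gamma_1}\rho_2\dd\ell_1$, integrate by parts using $\nabla_{T_1}(\rho_2T_1)=\inner{\grad\rho_2}{T_1}T_1+\rho_2\kappa_1$, test against projected smooth fields, and divide by $\rho_2>0$; the paper proves the formula for $H^2$ fields and obtains the $H^1$ case by continuous extension, rather than differentiating along $H^1$ variations as you do. Your ``main obstacle'' can be avoided altogether: the paper tests with $\xi_1=(\Id-\gamma_1\otimes\gamma_1)\varphi$, which is already $H^2$ and gives $\int\inner{F_1}{\varphi}\dd\ell_1=0$ directly because $F_1\perp\gamma_1$, so no density argument is needed.
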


\begin{proof}  
We write 
$$E(\Gamma)=\int_{\gamma_{i}}\rho_{j}d\ell_{i},\quad i.j=1,2,\quad i\neq j.$$
For an \(H^2\) sphere-tangent variation field \(\xi_i\), differentiation and integration by parts along the closed curve give
\begin{align}
    \delta E_\Gamma[\xi_{i}]&=\int_{\gamma_i}\left(\inner{\grad\rho_{j}}{\xi_{i}}+\rho_{j}\inner{T_i}{\nabla_{T_i}\xi_{i}}\right)
 \dd\ell_i\notag\\
 &=\int_{\gamma_i}\left(\inner{\grad\rho_{j}-\nabla_{T_i}(\rho_{j}T_{i})}{\xi_{i}}\right)
 \dd\ell_i\notag\\
 &=\int_{\gamma_i}\inner{F_{i}}{\xi_{i}}\dd\ell_i\notag,
\end{align}
where the last equality uses
$$\nabla_{T_i}(\rho_{j}T_{i})=\inner{\grad \rho_{j}}{T_i}T_i+\rho_{j}\kappa_{i}.$$
In arclength coordinates, \(T_i\in H^1\) and \(\kappa_i\in L^2\), so the integration by parts is valid in the Sobolev sense. Since $F_i\in L^2$, the right-hand side extends continuously to \(H^1\) sphere-tangent test fields. Summing the two partial variations proves \eqref{eq:first-variation}.

Given an arbitrary smooth periodic $\R^4$-valued field $\varphi$,
take an admissible sphere-tangent field
\[
 \xi_i=(\Id-\gamma_i\otimes\gamma_i)\varphi\in H^2.
\]
 Since $F_i\perp\gamma_i$, weak criticality gives $\int\inner{F_i}{\varphi}\dd\ell_i=0$.
The speed is bounded away from zero implies $F_i=0$ almost everywhere.  

Conversely, $F_i=0$ makes every first variation vanish.  Dividing by the positive potentials proves \eqref{eq:EL}.
\end{proof}
\begin{remark}
He \cite[Lemma~3.1, equations~(3-2)-(3-3)]{He2002} derived the first variation for smooth links and smooth variation fields. We repeat the computation here in order to justify the formula for regular $H^2$ links and $H^{1}$ variation fields, which obtain the weak Euler–Lagrange equation used below.
\end{remark}

\section{Global normalization and compactness of critical pairs}\label{sec:global}
In this section, we establish a global M\"obius gauge for non-split pairs in \(\sA\). Combined with the critical equation, this yields regularity and uniform derivative estimates for weakly critical pairs. We then prove a compactness theorem for critical sequences approaching the Hopf energy, which will be used in the proof of the gap theorem.

We first give a global M\"obius gauge. Let $N=(0,0,0,1)$ and let $\sigma:\Sph^3\setminus\{N\}\to\R^3$ be stereographic projection.  Its inverse satisfies
\begin{equation}\label{eq:stereo}
 \sigma^{-1}(z)=\left(\frac{2z}{1+|z|^2},\frac{|z|^2-1}{1+|z|^2}\right),
 \qquad
 |\sigma^{-1}(z)-\sigma^{-1}(w)|
 =\frac{2|z-w|}{\sqrt{(1+|z|^2)(1+|w|^2)}}.
\end{equation}

\begin{proposition}\label{prop:gauge}
Fix \(0<\Lambda<\infty\). Every non-split pair
\(\Gamma=(\gamma_1,\gamma_2)\in\sA\) satisfying
$E(\Gamma)\le \Lambda$ has an orientation-preserving M\"obius image for which
\begin{equation}\label{eq:explicit-length-bounds}
 \frac{2\pi}{3}
 \le L(\gamma_i)
 \le \frac{6\Lambda}{\pi},
 \qquad i=1,2,
\end{equation}
and
\begin{equation}\label{eq:gauge-bounds}
\dist_{\R^4}(\gamma_1,\gamma_2) \ge  d_\Lambda>0.
\end{equation}
The constant \( d_\Lambda\) depends only on \(\Lambda\).
\end{proposition}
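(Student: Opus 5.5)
The plan is to use only two ingredients: the Möbius invariance of $E$ and the crude pointwise bound $|x-y|\le 2$ on $\Sph^3$. The latter gives
\[
 E(\Gamma)\ge \tfrac14 L(\gamma_1)L(\gamma_2).
\]
The upper bound in \eqref{eq:explicit-length-bounds} is then a consequence of the lower bound: if both lengths are at least $2\pi/3$, then $L(\gamma_i)\le 4\Lambda/L(\gamma_j)\le 6\Lambda/\pi$. So the real work is to find a Möbius normalization in which neither component is short. After that, \eqref{eq:gauge-bounds} should follow from a logarithmic lower bound for the energy near a close approach.

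\textbf{Step 1: normalization (the expected main obstacle).} First I use non-splitness in the following form. If a closed round ball $B\subset\Sph^3$ contains $\gamma_j$, then $\gamma_i$ must meet $B$; otherwise a slight enlargement of $\partial B$ is a smooth sphere separating the two components.

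Next I choose the smallest closed spherical cap $B_2$ containing $\gamma_2$. I apply an orientation-preserving Möbius map sending $B_2$ to a closed hemisphere. This preserves the property of being a minimal enclosing cap. Hence the new $\gamma_2$ lies in no open hemisphere, since otherwise a smaller cap would contain it. A closed curve not contained in any open hemisphere has length $\ge 2\pi$, by the classical antipodal-point argument, so $L(\gamma_2)\ge 2\pi$.

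There remains the three-dimensional family of Möbius maps preserving this hemisphere, namely the isometries of $\mathbb H^3$ acting on the boundary sphere. I use it to enlarge $\gamma_1$ without shrinking $\gamma_2$ below $2\pi/3$. Concretely, I consider a continuous "conformal centering" of $\gamma_1$ inside this subgroup, and run a continuity/degree argument on the pair of quantities $(L(\gamma_1),L(\gamma_2))$ along the family. The aim is to show that the hemisphere-preserving maps can make $\gamma_1$ spread while $\gamma_2$, which already touches the boundary great sphere in a balanced way, keeps length $\ge 2\pi/3$. The key input for this is that $\gamma_1$ meets every ball containing $\gamma_2$, and vice versa.

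If this balancing argument proves too delicate, my fallback is a different choice of Möbius map. I would take $\phi$ maximizing $\min_i L(\phi\circ\gamma_i)$, which is bounded above by the product bound. Then I would argue by contradiction that at a maximizer the shorter curve, lying in a small cap, could be dilated further while the other remains long. That would contradict maximality unless both lengths are $\ge 2\pi/3$. Pinning down the constant $2\pi/3$ exactly is where I expect the friction to be.

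\textbf{Step 2: separation.} Suppose $|x_0-y_0|=\delta$ with $x_0\in\gamma_1$ and $y_0\in\gamma_2$. By Step 1, together with the enclosing-cap property, each curve has diameter bounded below by an absolute constant. Hence each exits the ball $B(x_0,r_0)$ for some absolute $r_0>0$. Therefore, for every $\delta\le r\le r_0$, each $\gamma_i$ has length at least $r$ inside the annulus $B(x_0,2r)\setminus B(x_0,r)$, up to a shift by $\delta$.

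Summing over dyadic shells $r=2^k\delta$, and using $|x-y|\lesssim r$ for points in the same shell, gives
\[
 E(\Gamma)\ge c\sum_{k} \frac{r_k\cdot r_k}{r_k^2}\ge c\log(r_0/\delta).
\]
Thus $\delta\ge r_0e^{-\Lambda/c}=:d_\Lambda$, which depends only on $\Lambda$. The endpoint effects and the exact constants here are routine.
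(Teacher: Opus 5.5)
\textbf{There is a genuine gap in Step~1.} Your reduction of the upper length bound to the lower one via $E\ge\frac14L(\gamma_1)L(\gamma_2)$ is exactly what the paper does. But Step~1 carries the entire content of the proposition, and it is not proved.

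Its first half rests on a false claim. A general Möbius map sending the minimal cap $B_2$ onto a hemisphere does not ``preserve minimality''. Radius-minimality is not Möbius invariant. Inclusion-minimality of an enclosing hemisphere does not prevent $\gamma_2$ from lying in an open hemisphere. Concretely, the stabilizer of a round ball restricts to the full Möbius group of its boundary $2$-sphere. So if $\gamma_2$ is a round circle, it can be sent to a tiny circle on the boundary great sphere, which lies in an open hemisphere and is short. The claim does hold for the specific radial dilation fixing the center of $B_2$. That map preserves the directions of the contact points seen from the center, and hence the balancing condition that characterizes the minimal cap.

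The second half is only an intention. Neither the ``conformal centering/degree argument'' nor the fallback is carried out. The fallback's key sentence, that at a maximizer of $\min_iL$ the short curve can be dilated while the other stays long, is precisely what must be shown. Dilating about the short curve's cap contracts every part of the other curve outside that cap, and you give no mechanism forcing it to remain long.

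\textbf{The missing idea is to put a point of $\gamma_1$ at infinity.}
\begin{enumerate}
\item Rotate a point of $\gamma_1$ to $N$. Then translate and dilate in the stereographic chart so that $0\in\sigma(\gamma_2)$ and $\diam\sigma(\gamma_2)=1$. Hence $\sigma(\gamma_2)\subset\overline B_1(0)$.
\item Your own ball formulation of non-splitness forces $\gamma_1$ to meet $\sigma^{-1}(\overline B_1(0))$, which lies in the closed southern hemisphere. Since $\gamma_1$ also passes through $N$, its spherical diameter is at least $\pi/2$, so $L(\gamma_1)\ge\pi$.
\item Take $z,w\in\sigma(\gamma_2)$ with $|z-w|=1$ and $|z|,|w|\le1$. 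By \eqref{eq:stereo} their preimages have chordal distance at least $1$. So $\gamma_2$ has spherical diameter at least $\pi/3$, and $L(\gamma_2)\ge2\pi/3$.
\end{enumerate}
This one normalization makes both curves long at once, with no balancing argument.

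\textbf{A remark on Step~2.} Length lower bounds alone do not give the diameter lower bounds your dyadic argument needs, although the normalization above happens to supply them. The paper avoids diameters altogether. It takes arclength origins at a closest pair and uses chord $\le$ arc to get $|\gamma_1(s)-\gamma_2(t)|\le d+s+t$ on $[0,\ell]^2$ with $\ell=2\pi/3$. Integrating gives $\Lambda\ge\log\frac{(d+\ell)^2}{d(d+2\ell)}$, which is an explicit lower bound for $d$.
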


\begin{proof}
Send a point of $\gamma_1$ to the north pole $N$, and then apply a translation and dilation in the stereographic chart $\sigma$ so that
$$ N\in\gamma_1,\qquad 0\in\sigma(\gamma_2),\qquad \operatorname{diam}_{\mathbb R^3}\sigma(\gamma_2)=1. $$
Thus $\sigma(\gamma_2)\subset\overline B_1(0)$. Non-splitness forces $\gamma_1$ to meet $\sigma^{-1}(\overline B_1(0))$; otherwise the inverse image of a slightly larger round sphere would separate the components.

We also use the elementary estimate
\begin{equation}\label{eq:closed-curve-diameter}
 L(\eta) \ge 2\,\diam_{\Sph^3}\eta(\Sph^1)
\end{equation}
for every closed rectifiable curve
\(\eta:\Sph^1\to\Sph^3\) by considering the two arcs joining a diameter-realizing pair.
The first component contains $N$ and a point  $q\in\gamma_1(\Sph^1)$ with $|\sigma(q)|\le1$, so its spherical diameter is at least $\pi/2$.
Applying \eqref{eq:closed-curve-diameter} gives
\begin{equation}\label{eq:L1-lower}
 L(\gamma_1)\ge\pi.
\end{equation}

The compact set \(\sigma(\gamma_2(\Sph^1))\) realizes its diameter, so there exist \(z,w\) in this set such that
\[
 |z-w|=1.
\]
Note that \(|z|,|w|\le1\).
If \(d\) denotes their spherical distance, then formula~\eqref{eq:stereo} gives
\[
 2\sin\frac d2
 =\bigl|\sigma^{-1}(z)-\sigma^{-1}(w)\bigr|=
 \frac{2|z-w|}{\sqrt{1+|z|^2}\sqrt{1+|w|^2}}\ge1,
\]
and hence \(d\ge\pi/3\). Another application of
\eqref{eq:closed-curve-diameter} yields
\begin{equation}\label{eq:L2-lower}
 L(\gamma_2)\ge\frac{2\pi}{3}.
\end{equation}

Since \(|x-y|^2\le4\) for \(x,y\in\Sph^3\),
\begin{equation}\label{eq:energy-product-length}
 E(\Gamma)
 =
 \int_{\gamma_1}\int_{\gamma_2}
 \frac{\dd\ell_1\,\dd\ell_2}{|x-y|^2}
 \ge
 \frac14L(\gamma_1)L(\gamma_2).
\end{equation}
Combining \(E(\Gamma)\le\Lambda\) with
\eqref{eq:L1-lower}--\eqref{eq:L2-lower} gives the stronger individual estimates
\[
 L(\gamma_1)
 \le
 \frac{4\Lambda}{L(\gamma_2)}
 \le
 \frac{6\Lambda}{\pi},
 \qquad
 L(\gamma_2)
 \le
 \frac{4\Lambda}{L(\gamma_1)}
 \le
 \frac{4\Lambda}{\pi}.
\]
Together with \eqref{eq:L1-lower}--\eqref{eq:L2-lower}, these imply the symmetric bounds
\eqref{eq:explicit-length-bounds}.

To obtain a uniform lower bound for the distance between the two components, we set
\[
  d
 :=
 \dist_{\R^4}(\gamma_1,\gamma_2)
 =
 \min_{x\in\gamma_1(\Sph^1),\,y\in\gamma_2(\Sph^1)}
 |x-y|.
\]
Using unit-speed arclength coordinates on $\R/L(\gamma_i)\Z$
with the two origins chosen at a closest pair, so that
\[
 |\gamma_1(0)-\gamma_2(0)|= d.
\]
Put $\ell:=\frac{2\pi}{3}$. The lower length bounds ensure that both curves contain arclength intervals \([0,\ell]\). For \(0\le s,t\le\ell\),
the triangle inequality and the fact that chordal distance is bounded above by arclength give
\begin{align*}
 |\gamma_1(s)-\gamma_2(t)|
 &\le
 |\gamma_1(s)-\gamma_1(0)|
 +|\gamma_1(0)-\gamma_2(0)|
 +|\gamma_2(0)-\gamma_2(t)|  \\
 &\le s+ d+t.
\end{align*}
Restricting the energy integral to this parameter rectangle, we obtain
\begin{align}
 \Lambda
 \ge E(\Gamma)
 &\ge
 \int_0^\ell\int_0^\ell
 \frac{\dd s\,\dd t}{( d+s+t)^2} =
 \log\frac{( d+\ell)^2}
 { d( d+2\ell)}.\notag
\end{align}
Solving this inequality gives an explicit choice:
\begin{align}
  d\ge d_\Lambda
 :=\frac{2\pi}{3}\left((1-e^{-\Lambda})^{-1/2}-1\right)>0.\notag
\end{align}
Thus we complete the proof.
\end{proof}


\begin{remark}

Ge~\cite{Ge24} proved that any two disjoint
linked Jordan curves \(\gamma_{1},\gamma_{2}\subset\Sph^3\) satisfy
$$\operatorname{dist}_{\Sph^3}(\gamma_{1},\gamma_{2})\le\frac{\pi}{2},$$
with equality precisely for two great circles in orthogonal $2$-planes.
\end{remark}

From the Euler–Lagrange equation, we obtain regularity and uniform derivative estimates for weakly critical pairs.

\begin{lemma}\label{lem:potential}
Assume $\dist(\gamma_1,\gamma_2)\ge d>0$. Then, for every integer $m\ge0$, $i,j=\{1,2\}$, $i\neq j$,
\[
|{\nabla^m\rho_j}|_{L^\infty(\gamma_i)}
 \le C_mL(\gamma_j)d^{-m-2},
 \qquad
 \rho_j\ge\frac{L(\gamma_j)}4.
\]
For $m\ge1$,
\begin{equation}\label{eq:logrho-estimates}
 |{\nabla^m\log\rho_j}|_{L^\infty(\gamma_i)}\le C_m(d)
\end{equation}
with $C_m(d)$ independent of  $L(\gamma_j)$. 
\end{lemma}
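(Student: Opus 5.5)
The plan is to differentiate under the integral sign in the definition \eqref{eq:potentials} and bound the kernel pointwise. For $x\in\gamma_i(\Sph^1)$ we have $|x-y|\ge d$ for every $y\in\gamma_j(\Sph^1)$. Moreover, $K(\cdot,y)=|\cdot-y|^{-2}$ is smooth on $\R^4\setminus\{y\}$, and its Euclidean derivatives satisfy
\[
|D^m_xK|\le C_m|x-y|^{-m-2}.
\]
This follows by induction on $m$, since $D^m_x$ of $r^{-2}$ is a sum of terms $r^{-2-2k}$ times polynomials of degree $2k-m$ in $x-y$.

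To pass from Euclidean derivatives to the covariant derivatives $\nabla^m$ on $\Sph^3$, I will use the Gauss formula \eqref{eq:Gauss}. Iterating it shows that $\nabla^m f$ at $x$ is a linear combination of tangential projections of $D^kf$ for $k\le m$, with coefficients that are universal polynomials in $x$ and hence bounded on $\Sph^3$. Here $f$ is extended to a neighbourhood in $\R^4$, for instance homogeneously of degree $0$, or simply by using the given formula of $K$ in $\R^4$. This gives
\[
|\nabla^m_xK(x,y)|\le C_m\sum_{k\le m}|x-y|^{-k-2}\le C_m' d^{-m-2},
\]
where the last step uses $|x-y|\le 2$, so $|x-y|^{-k-2}\le 2^{m-k}|x-y|^{-m-2}$. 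Differentiation under the integral is legitimate because the integrand and all its $x$-derivatives are uniformly bounded on a neighbourhood of $\gamma_i$ that stays at distance at least $d/2$ from $\gamma_j$. Integrating over $\gamma_j$ then gives $|\nabla^m\rho_j|\le C_mL(\gamma_j)d^{-m-2}$. The lower bound $\rho_j\ge L(\gamma_j)/4$ is immediate from $|x-y|^2\le4$, exactly as in \eqref{eq:energy-product-length}.

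For \eqref{eq:logrho-estimates}, I will use the Faà di Bruno structure. For $m\ge1$, $\nabla^m\log\rho_j$ is a universal polynomial combination of the terms
\[
\rho_j^{-k}\,\nabla^{m_1}\rho_j\otimes\cdots\otimes\nabla^{m_k}\rho_j,\qquad m_1+\dots+m_k=m,\ m_l\ge1,
\]
which is proved by induction via $\nabla(\rho^{-k}T)=-k\rho^{-k-1}\nabla\rho\otimes T+\rho^{-k}\nabla T$. Each such term is bounded by
\[
\rho_j^{-k}\prod_{l} C_{m_l}L(\gamma_j)d^{-m_l-2}\le 4^k\prod_l C_{m_l}\, d^{-m-2k}.
\]
The factors $L(\gamma_j)^k$ cancel exactly against $\rho_j^{-k}$. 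Summing over the finitely many terms gives $C_m(d)$ independent of $L(\gamma_j)$.

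The only mildly delicate step is making the passage from $D$ to $\nabla$ on $\Sph^3$ rigorous for higher $m$, that is, controlling the curvature and second fundamental form terms. Since $II$ is explicit and bounded and the sphere is compact, I expect this to be routine bookkeeping rather than a genuine obstacle. Alternatively, one can bound $\nabla^m$ directly in a normal-coordinate chart of fixed size, in which the Christoffel symbols and their derivatives are uniformly bounded.
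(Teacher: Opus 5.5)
Your proposal is correct and follows essentially the same route as the paper. You bound the Euclidean derivatives by $|D_x^mK|\le C_m|x-y|^{-m-2}$, pass to $\nabla^m$ via the Gauss formula, differentiate under the integral, and get the lower bound from $|x-y|^2\le 4$; your remark that the lower-order Gauss terms are absorbed using $|x-y|\le 2$ is extra care the paper omits. For the logarithmic estimates, the cancellation of $L(\gamma_j)^k$ against $\rho_j^{-k}$ in your Fa\`a di Bruno expansion is the same homogeneity the paper uses when it writes $\rho_j=L(\gamma_j)\bar\rho_j$ and notes that every derivative of positive order kills $\log L(\gamma_j)$.
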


\begin{proof}
We only prove the case $i=1,j=2$.
Repeated Euclidean differentiation of $K(x,y)=|x-y|^{-2}$ yields a finite sum of tensors 
\[
D_x^mK(x,y)=|x-y|^{-m-2}P_m\left(\frac{x-y}{|x-y|}\right),
\]
where $P_m$ is a universal tensor-valued polynomial.  
The Gauss formula \eqref{eq:Gauss} gives
\begin{align}
 |\nabla_x^mK(x,y)|\le C_m|x-y|^{-m-2}\le C_md^{-m-2}.\notag
\end{align}
Arclength parametrization and dominated convergence justify differentiation under the integral:
\[
 \nabla^m\rho_2(x)
 =\int_{\gamma_2}\nabla_x^mK(x,y)\dd\ell_2(y).
\]
This proves the first estimate.  Since $|x-y|^2\le4$, the kernel is at least $1/4$, giving
the lower bound.

For the logarithmic estimates, write
\[
 \rho_2=L(\gamma_{2})\bar\rho_2,
 \qquad
 \bar\rho_2(x)=\frac1{L(\gamma_{2})}\int_{\gamma_2}K(x,y)\dd\ell_2(y).
\]
The function $\bar\rho_2$ is bounded below by $1/4$, and all of its ambient derivatives are bounded by constants depending only on $m$ and $d$.  Since $\log\rho_2=\log L(\gamma_{2})+\log\bar\rho_2$, every derivative of positive order is independent of the constant factor $L(\gamma_{2})$.  The ordinary chain rule for $\log$ proves \eqref{eq:logrho-estimates}.
\end{proof}

\begin{proposition}\label{prop:Cm}
Every constant-speed weakly critical pair in $\sA$ is smooth. Moreover, let $\Gamma_n=(\gamma_{1,n},\gamma_{2,n})$ be sequences of constant-speed weakly critical pairs, under the bounds
\[
 0<L_-\le L(\gamma_{i,n})\le L_+<\infty,
 \qquad
 \dist(\gamma_{1,n},\gamma_{2,n})\ge d_0>0.
\]
one has $|\gamma_{i,n}|_{C^{m}}\leq C_{m}(L_{-},L_{+},d)$ for every $m\ge0$. 
\end{proposition}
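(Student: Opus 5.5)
We will bootstrap regularity from the Euler--Lagrange equation \eqref{eq:EL}, written as an ODE for the constant-speed parametrization. Let $\Gamma=(\gamma_1,\gamma_2)$ be constant-speed and weakly critical, with speeds $v_i=L(\gamma_i)/(2\pi)$. By Proposition~\ref{prop:first-variation}, $\kappa_1=P_{N_1}\grad\log\rho_2$ almost everywhere. By the Gauss formula \eqref{eq:Gauss}, with $T_1=\gamma_1'/v_1$, this becomes
\begin{equation*}
 \gamma_1''=-v_1^2\gamma_1+v_1^2\,P_{N_1}\grad\log\rho_2(\gamma_1),
 \qquad
 P_{N_1}=\Id-\gamma_1\otimes\gamma_1-v_1^{-2}\gamma_1'\otimes\gamma_1',
\end{equation*}
and symmetrically for $\gamma_2$. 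The right-hand side is a smooth function of $(\gamma_1,\gamma_1')$ composed with $\grad\log\rho_2$ evaluated along $\gamma_1$. Since the images are compact and disjoint, $d:=\dist(\gamma_1,\gamma_2)>0$, so Lemma~\ref{lem:potential} applies.

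\textbf{Step 1: qualitative smoothness by induction.} Assume $\gamma_1,\gamma_2\in C^k$ with $k\ge1$; the base case holds because $H^2\hookrightarrow C^1$. The function $\rho_2$ is smooth on a neighbourhood of $\gamma_1(\Sph^1)$, with all ambient derivatives bounded by Lemma~\ref{lem:potential}. Hence $s\mapsto\grad\log\rho_2(\gamma_1(s))$ is $C^k$, and $P_{N_1}$ is $C^{k-1}$. The right-hand side is therefore $C^{k-1}$, which gives $\gamma_1\in C^{k+1}$, and likewise $\gamma_2\in C^{k+1}$. Induction yields $C^\infty$. One point needs care: the equation holds only a.e., and $\gamma_1\in H^2$. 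Because $\gamma_1''$ equals a continuous function almost everywhere and $\gamma_1'\in H^1$ is absolutely continuous, $\gamma_1'$ is $C^1$. This makes the first step rigorous.

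\textbf{Step 2: uniform bounds.} For sequences, we track constants through the same induction. The speeds satisfy $v_{i,n}\in[L_-/2\pi,L_+/2\pi]$, so $v_{i,n}^{\pm1}$ are bounded, and $|\gamma_{i,n}|=1$ gives $|\gamma_{i,n}|_{C^0}=1$ and $|\gamma_{i,n}'|=v_{i,n}\le C$. By \eqref{eq:logrho-estimates}, all derivatives of $\log\rho_{j,n}$ of positive order are bounded by constants $C_m(d_0)$. These constants are independent of $L(\gamma_{j,n})$, which is exactly why we use $\log\rho$ rather than $\rho$.

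Differentiating the ODE $m$ times, the chain rule (Fa\`a di Bruno) expresses $\gamma^{(m+2)}$ as a polynomial in $\gamma,\gamma',\dots,\gamma^{(m+1)}$, $v^{-2}$, and $\nabla^{r}\log\rho_j$ for $1\le r\le m+1$, evaluated along the curve. Here the ambient derivatives are converted to derivatives along the curve via \eqref{eq:Gauss}, which only introduces further polynomial terms in $\gamma$ and $\gamma'$. By induction on $m$ this gives
\begin{equation*}
 |\gamma_{i,n}|_{C^{m}}\le C_m(L_-,L_+,d_0).
\end{equation*}

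\textbf{Main obstacle.} The hardest part is the bookkeeping that converts the spherical covariant derivatives $\nabla^m\log\rho_j$ into ordinary derivatives of the composite $\grad\log\rho_j\circ\gamma_i$ in $\R^4$. The cleanest route is to extend $\log\rho_j$ to a neighbourhood in $\R^4$ using the same integral formula. This extension is Euclidean-smooth where $|x-y|\ge d_0/2$, with the same kernel bounds as in Lemma~\ref{lem:potential}. Since $\grad$ on the sphere is the tangential projection $(\Id-x\otimes x)$ of the Euclidean gradient, the spherical gradient is then a smooth ambient expression. Uniformity of the extension only needs the ambient derivative bounds on a tubular neighbourhood of fixed width, say $d_0/4$. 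These follow from the same kernel estimate once both the Euclidean derivatives and the lower bound $\rho_j\ge L(\gamma_j)/8$ are controlled there, so that the logarithmic normalization still removes the dependence on $L(\gamma_j)$.
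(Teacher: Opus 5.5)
Your proposal is correct and follows essentially the same route as the paper. Both rewrite the Euler--Lagrange equation as the ODE $\gamma_i''=v_i^2\bigl(P_{N_i}\grad\log\rho_j(\gamma_i)-\gamma_i\bigr)$, bootstrap regularity from it, and obtain uniform $C^m$ bounds by differentiating the ODE, using the $L(\gamma_j)$-independent bounds on derivatives of $\log\rho_j$ from Lemma~\ref{lem:potential}. Your bootstrap in $C^k$ instead of the paper's H\"older steps is an inessential variation. Your remarks on the a.e.\ validity of the equation and on extending $\log\rho_j$ to an ambient neighbourhood fill in details the paper leaves implicit.
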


\begin{proof}
By \Cref{lem:constant-speed}, take
\[
 |\gamma_i'|\equiv a_i=\frac{L_i}{2\pi}>0,
 \qquad
 T_i=\frac{\gamma_i'}{a_i}.
\]
Thus the curvature and $\gamma''$ satisfy
\[
\gamma_i''=a_i^2(\kappa_i-\gamma_i).
\]
Combining this identity with \eqref{eq:EL} gives the following  Euler-Lagrange equation:
\begin{equation}\label{eq:critical-ODE}
 \gamma''_{i}=a_{i}^2\left(P_{N_i}G_{j}(\gamma_{i})-\gamma_{i}\right),
 \qquad
 G_{j}=\grad\log\rho_j,
 \qquad j\ne i.
\end{equation}
Here $P_{N_i}$ is the projection onto the normal space. 

By Lemma \ref{lem:potential}, $G_{j}$ extends smoothly near $\gamma_{i}$. Initially $H^2\hookrightarrow C^{1,1/2}$, so $\gamma_{i}\in C^{1,1/2}$, which implies $\gamma_i\in C^{2,1/2}$.  
Repeating this argument yields $\gamma_i\in C^\infty$.

Under the stated bounds, $|\gamma_{i,n}|=1$ and $|\gamma_{i,n}'|=a_{i,n}$ give a uniform $C^1$ bounded.  Since \Cref{lem:potential} gives common bounds for every ambient derivative of $G_{j,n}$ in a fixed neighborhood of the opposite component, \eqref{eq:critical-ODE} gives the $C^2$ bound.
The higher-order estimates follow inductively. Indeed, if
$$
|\gamma_{i,n}|_{C^m}\le C_m
$$
for some $m\ge2$, then differentiating \eqref{eq:critical-ODE} $m-1$ times expresses $\gamma_{i,n}^{(m+1)}$ as a finite sum involving only derivatives of $\gamma_{i,n}$ of order at most $m$ and ambient derivatives of $G_{j,n}$ of order at most $m-1$. All these terms are uniformly bounded by the induction hypothesis and \eqref{eq:logrho-estimates}. Hence
$$
|\gamma_{i,n}|_{C^{m+1}}\le C_{m+1}(L_{-},L_{+},d).
$$
Induction proves the result.
\end{proof}


\begin{lemma}\label{lem:nonsplit-closed}
Suppose that non-split curve pairs $\Gamma_n$ converge uniformly to a curve pair $\Gamma_\infty$ whose component images are disjoint.  Then $\Gamma_\infty$ is non-split.
\end{lemma}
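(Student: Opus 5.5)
We argue by contraposition: if $\Gamma_\infty$ is split, then so is $\Gamma_n$ for all large $n$. Suppose a smooth embedded $2$-sphere $\Sigma\subset\Sph^3$, disjoint from both component images of $\Gamma_\infty=(\gamma_{1,\infty},\gamma_{2,\infty})$, separates them. Then $\Sph^3\setminus\Sigma$ has two components $U_1,U_2$ with $\gamma_{1,\infty}(\Sph^1)\subset U_1$ and $\gamma_{2,\infty}(\Sph^1)\subset U_2$. By the Jordan--Brouwer separation theorem for smooth spheres, each $U_i$ is open, so this description makes sense.

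The first step is to produce a uniform margin. Each image $\gamma_{i,\infty}(\Sph^1)$ is compact and disjoint from the closed set $\Sigma$, so
\[
\eta:=\min_{i=1,2}\dist_{\R^4}\bigl(\gamma_{i,\infty}(\Sph^1),\Sigma\bigr)>0 .
\]
The same compactness gives $\delta>0$ such that the closed $\delta$-neighbourhood of $\gamma_{i,\infty}(\Sph^1)$ within $\Sph^3$ lies in $U_i$. Concretely, the distance from the compact set $\gamma_{i,\infty}(\Sph^1)$ to the closed set $\Sph^3\setminus U_i$ is positive, so we may take $\delta$ smaller than it.

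Next we use uniform convergence. Choose $n_0$ so that $\sup_{s}|\gamma_{i,n}(s)-\gamma_{i,\infty}(s)|<\delta$ for all $n\ge n_0$ and $i=1,2$. Then every point $\gamma_{i,n}(s)$ lies within $\delta$ of $\gamma_{i,\infty}(s)$, hence in $U_i$. Consequently $\gamma_{1,n}(\Sph^1)\subset U_1$ and $\gamma_{2,n}(\Sph^1)\subset U_2$, and both images are disjoint from $\Sigma$. The same sphere $\Sigma$ therefore separates the components of $\Gamma_n$, so $\Gamma_n$ is split. This contradicts the hypothesis, and hence $\Gamma_\infty$ is non-split.

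The argument has essentially no hard step. The only point needing care is that "separates" must be read through the two complementary open regions of $\Sigma$, rather than only through disjointness. Once this is done, the margin $\delta$ from compactness absorbs the uniform error. Note that we never use disjointness of the limit images beyond what is needed for non-splitness of $\Gamma_\infty$ to be defined; the separating sphere is taken directly from the definition.
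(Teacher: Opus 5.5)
Your proof is correct and follows essentially the same route as the paper: a sphere separating the limit components stays at positive distance from the compact limit images, and uniform convergence then places each $\gamma_{i,n}(\Sph^1)$ on the same side of that fixed sphere, so it would split $\Gamma_n$, a contradiction. Taking $\delta$ below the distance to $\Sph^3\setminus U_i$, rather than to $\Sigma$, just makes explicit the ``same side'' step that the paper leaves implicit.
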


\begin{proof}
If a smooth embedded sphere $S$ separated the two limiting images, compactness would give a positive distance from $S$ to each image.  Uniform convergence then places each approximating image, for large $n$, on the same side of $S$ as its limit.  The same fixed sphere would split $\Gamma_n$, a contradiction.
\end{proof}

Combining the preceding normalization, regularity, and uniform estimates, we obtain the following compactness theorem for sequences of weakly critical pairs.
\begin{theorem}\label{prop:threshold}
Let $\Gamma_n\in\sA$ be non-split and weakly critical, with
$E(\Gamma_n)\to2\pi^2$ as $n\rightarrow\infty$. Up to M\"obius transformations and separate reparametrizations of the components, a subsequence converges to $\Hopf$ in $C^m$ for every $m$.
\end{theorem}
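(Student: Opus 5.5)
We begin by normalizing. Since $E(\Gamma_n)\to2\pi^2$, we may assume $E(\Gamma_n)\le\Lambda:=2\pi^2+1$ for all $n$. Apply \Cref{prop:gauge} to each $\Gamma_n$ and replace $\Gamma_n$ by the resulting orientation-preserving M\"obius image. Then reparametrize each component to constant speed via \Cref{lem:constant-speed}. Neither operation changes the energy, non-splitness or weak criticality; for the M\"obius step this is noted at the end of the proof of \Cref{lem:constant-speed}. The normalized pairs satisfy the uniform bounds $2\pi/3\le L(\gamma_{i,n})\le 6\Lambda/\pi$ and $\dist(\gamma_{1,n},\gamma_{2,n})\ge d_\Lambda$. \Cref{prop:Cm} then gives $|\gamma_{i,n}|_{C^m}\le C_m$ uniformly in $n$, for every $m$.

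Next we extract a limit. By Arzel\`a--Ascoli and a diagonal argument, a subsequence converges in $C^m$ for every $m$ to a smooth pair $\Gamma_\infty=(\gamma_{1,\infty},\gamma_{2,\infty})$. The limit keeps the following properties.
\begin{itemize}
\item Each component has constant speed $a_{i,\infty}=\lim a_{i,n}\ge 1/3>0$, so it is regular.
\item The distance between components is at least $d_\Lambda$, so the images are disjoint and $\Gamma_\infty\in\sA$.
\item $\Gamma_\infty$ is non-split by \Cref{lem:nonsplit-closed}.
\item On this region the kernel $|x-y|^{-2}$ is bounded by $d_\Lambda^{-2}$, and the speeds converge uniformly. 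Hence $E(\Gamma_n)\to E(\Gamma_\infty)$, so $E(\Gamma_\infty)=2\pi^2$.
\end{itemize}

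We then identify the limit. By the equality case of \Cref{thm:AMN}, there is a conformal transformation $\Phi$ carrying the component images of $\Gamma_\infty$ onto the standard Hopf circles, each with multiplicity one. One must ensure $\Phi$ can be taken orientation preserving. If it is not, compose it with a reflection preserving the Hopf link, for instance $(z_1,z_2,z_3,z_4)\mapsto(z_1,-z_2,z_3,z_4)$, which maps each Hopf circle to itself. Multiplicity one, together with constant speed and regularity, means each $\Phi\circ\gamma_{i,\infty}$ is a $C^1$ diffeomorphism of $\Sph^1$ onto the corresponding great circle. Indeed, a regular closed curve on a circle covers it with degree $\pm1$ precisely when it is traversed with multiplicity one. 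Thus $\Phi\circ\gamma_{i,\infty}=x\circ\phi_i$ (respectively $y\circ\phi_i$) for smooth diffeomorphisms $\phi_i$, possibly orientation reversing, which $\Diff^2(\Sph^1)$ allows. Now set $\tilde\Gamma_n=(\Phi\circ\gamma_{1,n}\circ\phi_1^{-1},\Phi\circ\gamma_{2,n}\circ\phi_2^{-1})$. These pairs are obtained from $\Gamma_n$ by Möbius transformations and separate reparametrizations. Since $\Phi$ and the $\phi_i$ are fixed and smooth, $\tilde\Gamma_n\to\Hopf$ in every $C^m$.

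The main obstacle is this last step, namely upgrading the image-level statement of \Cref{thm:AMN} to parametrized convergence. The theorem applies to rectifiable links, so it covers the smooth limit, but it only identifies images and multiplicities. The points needing care are that regularity plus multiplicity one yields a genuine circle diffeomorphism rather than a map that backtracks, and that the orientation of $\Phi$ can be corrected. A secondary check is continuity of the energy, which holds because of the uniform distance bound.
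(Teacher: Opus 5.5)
Your proposal is correct and follows the paper's proof essentially step by step: gauge by \Cref{prop:gauge}, constant-speed reparametrization, uniform $C^m$ bounds from \Cref{prop:Cm}, Arzel\`a--Ascoli, passage of membership in $\sA$, non-splitness and energy to the limit, and identification of the limit through the equality case of \Cref{thm:AMN}. The differences are only in detail. The paper gets degree one from $2\pi^2=2\pi^2|m_1m_2|$ instead of reading multiplicity one off the equality statement. Your reflection $(z_1,z_2,z_3,z_4)\mapsto(z_1,-z_2,z_3,z_4)$ makes precise the orientation normalization that the paper only asserts; a swap of labels, if needed, is fixed the same way by $(z_1,z_2,z_3,z_4)\mapsto(z_3,z_4,z_1,z_2)\in\operatorname{SO}(4)$.
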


\begin{proof}
Discarding finitely many terms, fix $\Lambda$ with $E(\Gamma_n)\le\Lambda$. Apply \Cref{prop:gauge} and \Cref{lem:constant-speed} to obtain two-sided length bounds, a uniform positive chordal separation, and constant-speed parametrizations. The curves are smooth and have uniform $C^m$ bounds for every $m$ by \Cref{prop:Cm}.  From Arzel\`a--Ascoli theorem and a diagonal subsequence, we get a limit $\Gamma_\infty$ in every $C^m$ norm.

The lower speed bound persists under $C^1$ convergence, and the separation bound persists under uniform convergence.  Thus $\Gamma_\infty\in\sA$. It is non-split by \Cref{lem:nonsplit-closed}.  On the fixed parameter torus the energy densities converge uniformly, since their denominators stay away from zero.  Hence
\[
 E(\Gamma_\infty)=\lim_{n\rightarrow\infty} E(\Gamma_n)=2\pi^2.
\]
The equality case of Theorem 1.1 implies that after a conformal map $\Phi\in\Mob^+(\Sph^3)$ (labeling and orientation fixed as explained after \eqref{eq:G}), each $\Phi\circ\gamma_{i,\infty}$ is a regular covering of a standard Hopf circle of some degree $m_{i}\neq 0$,  and
$$2\pi^2= E(\Gamma_\infty)= E(\Phi\Gamma_\infty)=2\pi^2|m_{1}m_{2}|,$$
So equality forces $|m_1|=|m_2|=1$, both are circle diffeomorphisms.
Thus, $g_{\infty}\cdot\Gamma_{\infty}=\Hopf$ for some $g_{\infty}\in{\rm G}$.
These preserve convergence in every $C^m$ norm and give the stated conclusion.
\end{proof}

\section{The second variation and infinitesimal M\"obius action at the Hopf Link}\label{sec:Hopf-Hessian}

In this section, we firstly compute the normal directional second variation (the Hessian) of the M\"obius cross energy at the Hopf Link, and we obtain its kernel. Then we identify the kernel with the infinitesimal M\"obius action at the Hopf Link in the normal direction. 

At first, we calculate the second variation of the M\"obius cross energy at Hopf Link. Set
\[
 P_1=\Span\{e_1,e_2\},\qquad P_2=\Span\{e_3,e_4\}.
\]
For the Hopf pair~\eqref{eq:Hopf}, the curve-normal bundles are the constant bundles
$N_xx=P_2$ and $N_yy=P_1$.  Thus normal fields have the form
\[
 U:\Sph^1\to P_2,\qquad V:\Sph^1\to P_1.
\]
For $k=0,1,2$, we set $\mathcal H^k=H^k(\Sph^1,P_2)\oplus H^k(\Sph^1,P_1)$ and $\mathcal H^{-1}=(\mathcal H^1)^\ast$.
In particular,
\begin{equation}\label{eq:H1-norm}
 |W|_{H^1}^2
 =\int_0^{2\pi}(|U|^2+|U'|^2)\dd s
  +\int_0^{2\pi}(|V|^2+|V'|^2)\dd t,
 \qquad W=(U,V).
\end{equation}


\begin{proposition}\label{prop:Hopf-Hessian}
The Hopf link is critical.  For $W=(U,V)\in\mathcal H^2$ its normal Hessian is given by
\begin{align}\label{eq:Hopf-Hessian}
 \delta^{2} E_\Hopf(W)
={}&\pi\int_0^{2\pi}|U'|^2\dd s
 +\pi\int_0^{2\pi}|V'|^2\dd t\\
 &+2\int_0^{2\pi}\int_0^{2\pi}
 \inner{y(t)}{U(s)}\inner{x(s)}{V(t)}\dd s\dd t.\notag
\end{align}
\end{proposition}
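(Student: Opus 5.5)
The plan is a direct second-order Taylor expansion of $E$ along an explicit curve of pairs, using the chart
\[
 x_\tau=\frac{x+\tau U}{|x+\tau U|},\qquad y_\tau=\frac{y+\tau V}{|y+\tau V|},
\]
with $U:\Sph^1\to P_2$ and $V:\Sph^1\to P_1$. Here $\delta^2E_\Hopf(W)$ is read as $\frac{\dd^2}{\dd\tau^2}E(x_\tau,y_\tau)\big|_{\tau=0}$ in this fixed chart. For $W\in\mathcal H^2$ and small $\tau$ the pair stays in $\sA$. By the separation estimate of \Cref{prop:gauge}, the integrand is smooth in $\tau$ uniformly on $\Torus$, so we may expand under the integral sign.

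\textbf{Step 1: expand the speed.} The whole computation rests on orthogonality of the two planes: $x,x'\in P_1$ and $U,U'\in P_2$. Write $n=|x+\tau U|=\sqrt{1+\tau^2|U|^2}$. Orthogonality gives $|x'+\tau U'|^2=1+\tau^2|U'|^2$. Differentiating $x_\tau$ gives a tangential term $(x'+\tau U')/n$ and a radial correction proportional to $n'/n^2$. Since $n'=O(\tau^2)$, the radial correction contributes only at order $\tau^4$ to $|x_\tau'|^2$. Hence
\[
 |x_\tau'|=1+\tfrac{\tau^2}{2}\bigl(|U'|^2-|U|^2\bigr)+O(\tau^3),
\]
and the analogous expansion holds for $|y_\tau'|$.

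\textbf{Step 2: expand the kernel.} Since $|x_\tau-y_\tau|^2=2-2\inner{x_\tau}{y_\tau}$, we need $\inner{x_\tau}{y_\tau}$. Because $\inner{x}{y}=0$ and $\inner{U}{V}=0$ (as $P_2\perp P_1$), we have $\inner{x+\tau U}{y+\tau V}=\tau a_0$ with $a_0=\inner{U}{y}+\inner{x}{V}$. The normalizations are $1+O(\tau^2)$, so $\inner{x_\tau}{y_\tau}=\tau a_0+O(\tau^3)$. Therefore
\[
 |x_\tau-y_\tau|^{-2}=\tfrac12\bigl(1+\tau a_0+\tau^2a_0^2\bigr)+O(\tau^3).
\]

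\textbf{Step 3: collect terms.} Multiplying the expansions and integrating over $\Torus$:
\begin{itemize}
\item The first-order term is $\tfrac12\iint a_0\,\dd s\dd t$. It vanishes because $\int_0^{2\pi}y\,\dd t=\int_0^{2\pi}x\,\dd s=0$. This proves criticality of $\Hopf$ in normal directions; tangential directions are reparametrizations, covered by \Cref{lem:constant-speed}.
\item For the $\tau^2$ coefficient we use $\int_0^{2\pi}y\otimes y\,\dd t=\pi\,\Id_{P_2}$ and $\int_0^{2\pi}x\otimes x\,\dd s=\pi\,\Id_{P_1}$. These give $\iint\inner{U}{y}^2=\pi\int|U|^2\,\dd s$ and $\iint\inner{x}{V}^2=\pi\int|V|^2\,\dd t$.
\item The speed terms contribute $\tfrac12\cdot2\pi\cdot\tfrac12\int(|U'|^2-|U|^2)$, and similarly for $V$.
\end{itemize}
The $-|U|^2$ and $-|V|^2$ contributions therefore cancel exactly against the $a_0^2$ diagonal terms. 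What remains of the $\tau^2$ coefficient is
\[
 \tfrac12\Bigl[\pi\int|U'|^2+\pi\int|V'|^2+2\iint\inner{y}{U}\inner{x}{V}\Bigr].
\]
Doubling this coefficient gives the $\tau$-second derivative, which is \eqref{eq:Hopf-Hessian}.

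\textbf{Main obstacle.} The delicate point is the bookkeeping of order-$\tau^2$ contributions from the sphere normalization, both in the speed (the radial term) and in $\inner{x_\tau}{y_\tau}$. One must check that these are genuinely $O(\tau^3)$ or higher. This holds because the normalizations multiply a quantity that already vanishes at $\tau=0$, and because the cross pairings $\inner{x'}{U'}$, $\inner{U}{V}$, $\inner{x}{y}$ vanish identically by $P_1\perp P_2$. The exact cancellation of the zeroth-order terms via $\int y\otimes y=\pi\Id$ is what gives the clean form.
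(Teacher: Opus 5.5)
Your proposal is correct and follows essentially the same route as the paper. Both arguments expand the speeds and $\inner{x_\tau}{y_\tau}$ pointwise to second order using $P_1\perp P_2$, kill the first-order term with $\int x\dd s=\int y\dd t=0$, and cancel the $-|U|^2,-|V|^2$ contributions via $\int y\otimes y\dd t=\pi\Id_{P_2}$ and $\int x\otimes x\dd s=\pi\Id_{P_1}$. The only difference is your chart: you use radial projection $(x+\tau U)/|x+\tau U|$ instead of the normal exponential map. Both satisfy $x_\tau=x+\tau U-\frac{\tau^2}{2}|U|^2x+O(\tau^3)$, so your second derivative coincides with the one in the exponential chart used later in \Cref{lem:remainder}. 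Also, separation for small $\tau$ follows directly from $|x_\tau-y_\tau|^2=2-2\inner{x_\tau}{y_\tau}$ with $\inner{x_\tau}{y_\tau}=O(\tau)$; you do not need \Cref{prop:gauge} for it.
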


\begin{proof}
It suffices first to take smooth fields and use the pointwise spherical exponential variations $x_\tau(s)=\exp_{x(s)}(\tau U(s))$ and $y_\tau(t)=\exp_{y(t)}(\tau V(t))$. Orthogonality of the two
planes gives
\begin{align}
 x_\tau=x+\tau U-\frac{\tau^2}{2}|U|^2x+O(\tau^3),
 \qquad
 y_\tau=y+\tau V-\frac{\tau^2}{2}|V|^2y+O(\tau^3).\notag
\end{align}
Using $x,x'\in P_1$ and $U,U'\in P_2$, and symmetrically for $y,V$, gives
\begin{align}
 |x_\tau'|=1+\frac{\tau^2}{2}(|U'|^2-|U|^2)+O(\tau^3),
 \quad
 |y_\tau'|=1+\frac{\tau^2}{2}(|V'|^2-|V|^2)+O(\tau^3).\notag
\end{align}
Let $ a(s,t)=\inner{U(s)}{y(t)}+\inner{x(s)}{V(t)}.$
For $c_\tau=\inner{x_\tau}{y_\tau}$, orthogonality of $P_1$ and $P_2$ gives
$c_0=0$, $\dot c_0=a$, and $\ddot c_0=2\inner{U}{V}=0$.  Therefore
\begin{align}
 \frac1{|x_\tau-y_\tau|^2}
 =\frac1{2(1-c_\tau)}
 =\frac12+\frac\tau2a+\frac{\tau^2}{2}a^2+O(\tau^3).\notag
\end{align}  
The first-order term is linear in $x(s)$ and $y(t)$. Since both Hopf circles have vanishing mean, $\int_{\mathbb{S}^{1}}x(s)ds=\int_{\mathbb{S}^{1}}y(t)dt=0$, every such term integrates to zero on the parameter torus, hence $\delta E_{H}=0$ and $\Hopf$ is critical.
Twice the coefficient of $\tau^2$ after integration is
\begin{align*}
 \iint a^2\dd s\dd t
 +\pi\int_0^{2\pi}(|U'|^2-|U|^2)\dd s
 +\pi\int_0^{2\pi}(|V'|^2-|V|^2)\dd t.
\end{align*}
Finally,
\begin{equation}\label{eq:moments}
 \int_0^{2\pi}x\otimes x\dd s=\pi\Id_{P_1},
 \qquad
 \int_0^{2\pi}y\otimes y\dd t=\pi\Id_{P_2}.
\end{equation}
Hence the pure squares in $\iint a^2\dd s\dd t$ equal $\pi\int|U|^2$ and $\pi\int|V|^2$ and cancel the negative zeroth-order terms.  The remaining cross term is exactly~\eqref{eq:Hopf-Hessian}. 
\end{proof}

\begin{remark}
    In fact, the first variation extends continuously to \(\mathcal H^1\), while the second variation extends to a bounded bilinear form on \(\mathcal H^1\times\mathcal H^1\); see Lemma \ref{lem:remainder} for details. These extensions will be used in the local analysis.
\end{remark}

Decompose the normal fields into their constant, first-harmonic, and higher-frequency parts:
\begin{align}
 U(s)=U_0+A x(s)+U_{\ge2}(s),\qquad V(t)=V_0+B y(t)+V_{\ge2}(t) \notag
\end{align}
where
$$U_0=\frac1{2\pi}\int_0^{2\pi}U(s)\dd s,\qquad A=\frac{1}{\pi}\int_{0}^{2\pi}U\otimes x\ ds,$$
and $V_{0},B$ are defined analogously. Thus $ A\in\Hom(P_1,P_2),B\in\Hom(P_2,P_1)$, the Fourier expansion of $U_{\ge2}$ contains only frequencies $|n|\ge2$.

If $U\in H^1$, the same orthogonal decomposition holds in $H^1$, because Fourier projection commutes with differentiation.

\begin{proposition}\label{prop:perfect-square}
For every $W\in \mathcal H^1$ one has the exact identity
\begin{equation}\label{eq:perfect-square}
 \delta^{2} E_\Hopf(W)
 =\pi^2|B+A^T|_F^2
 +\pi\int|U_{\ge2}'|^2
 +\pi\int|V_{\ge2}'|^2.
\end{equation}
where $|A|^2_{F}={\rm tr}(A^{T}A)$. Consequently, the nullspace $\KH$ of the bilinear form is
\begin{equation}\label{eq:kernel}
 \KH={}\{(U_0,V_0):U_0\in P_2,\ V_0\in P_1\}\oplus\{(Ax,-A^Ty):A\in\Hom(P_1,P_2)\},
\end{equation}
and $\dim\KH=8$ .
Its $L^2$-orthogonal complement is
\begin{equation}\label{eq:orthogonal-complement}
  \KH^\perp
 =\{(Ax+U_{\ge2},A^Ty+V_{\ge2}):A\in\Hom(P_1,P_2)\}.
\end{equation}
For every $W\in\mathcal H^1\cap\KH^\perp$,
\begin{equation}\label{eq:coercivity}
\delta^{2} E_\Hopf(W)\ge\frac{4\pi}{5}|W|_{H^1}^2.
\end{equation}
\end{proposition}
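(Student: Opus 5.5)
The plan is to expand everything in Fourier modes using the decomposition $U=U_0+Ax+U_{\ge2}$, $V=V_0+By+V_{\ge2}$, and to evaluate each of the three terms of \eqref{eq:Hopf-Hessian} by orthogonality. I would first do this for $W\in\mathcal H^2$ and then extend the identity to $\mathcal H^1$ by density. Both sides of \eqref{eq:perfect-square} are continuous in the $H^1$ norm: the cross term is bounded by $C|U|_{L^2}|V|_{L^2}$, and $A,B$ depend continuously on $W$ in $L^2$.

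\textbf{Step 1: derivative terms.} Constants have zero derivative, and $(Ax)'=Ax'$ is orthogonal in $L^2$ to the frequency-$\ge2$ part of $U'$. Using \eqref{eq:moments} with $x'\otimes x'$ (which also integrates to $\pi\Id_{P_1}$), I get
\[
\pi\int|U'|^2=\pi^2|A|_F^2+\pi\int|U_{\ge2}'|^2,
\]
and similarly $\pi\int|V'|^2=\pi^2|B|_F^2+\pi\int|V_{\ge2}'|^2$.

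\textbf{Step 2: cross term.} Integrating first in $s$ gives
\[
\int_0^{2\pi} U(s)\inner{x(s)}{V(t)}\dd s=\Bigl(\int_0^{2\pi} U\otimes x\,\dd s\Bigr)V(t)=\pi A\,V(t).
\]
This holds because $\int U_0\otimes x=0$ and $\int U_{\ge2}\otimes x=0$. Then integrating $\inner{y(t)}{\pi A V(t)}$ in $t$ only sees the first harmonic $By$ of $V$, giving $\pi\tr\bigl(A\int V\otimes y\bigr)=\pi^2\tr(AB)$. So the cross term equals $2\pi^2\tr(AB)=2\pi^2\inner{B}{A^T}_F$. Adding Step 1 completes the square:
\[
\pi^2\bigl(|A|_F^2+|B|_F^2+2\inner{B}{A^T}_F\bigr)=\pi^2|B+A^T|_F^2,
\]
which is \eqref{eq:perfect-square}.

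\textbf{Step 3: kernel and complement.} The form is a sum of nonnegative squares, so its nullspace is where each square vanishes. Then $U_{\ge2}'=0$ forces $U_{\ge2}=0$, since it has mean zero, and likewise $V_{\ge2}=0$; also $B=-A^T$. The constants $U_0,V_0$ are free. This gives \eqref{eq:kernel}, with dimension $2+2+4=8$.

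For the complement, $L^2$-orthogonality to the constant pairs kills $U_0$ and $V_0$. Orthogonality to $(Cx,-C^Ty)$ reads $\pi\inner{A}{C}_F-\pi\inner{B}{C^T}_F=0$ for all $C$. Since $\inner{B}{C^T}_F=\inner{B^T}{C}_F$, this is equivalent to $A=B^T$, i.e.\ $B=A^T$, which gives \eqref{eq:orthogonal-complement}.

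\textbf{Step 4: coercivity.} For $W\in\KH^\perp$ the form equals $4\pi^2|A|_F^2+\pi\int|U_{\ge2}'|^2+\pi\int|V_{\ge2}'|^2$. On the $H^1$ side, the first-harmonic parts $(Ax,A^Ty)$ contribute $2\pi|A|_F^2+2\pi|A^T|_F^2=4\pi|A|_F^2$. Hence
\[
4\pi^2|A|_F^2=\pi\cdot 4\pi|A|_F^2\ \ge\ \tfrac{4\pi}{5}\cdot 4\pi|A|_F^2.
\]
For frequencies $|n|\ge2$, Parseval gives $\int|U_{\ge2}|^2\le\frac14\int|U_{\ge2}'|^2$, so $|U_{\ge2}|_{H^1}^2\le\frac54\int|U_{\ge2}'|^2$, and therefore $\pi\int|U_{\ge2}'|^2\ge\frac{4\pi}{5}|U_{\ge2}|_{H^1}^2$; the same holds for $V_{\ge2}$. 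The $H^1$ norm splits orthogonally across the Fourier blocks, so summing these bounds yields \eqref{eq:coercivity}.

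The computation is routine. The only step needing care is the bookkeeping in Step 2: checking that constants and high modes drop out of the cross term, and tracking transposes so that $\tr(AB)$ is correctly identified with $\inner{B}{A^T}_F$.
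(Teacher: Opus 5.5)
Your proposal is correct and follows essentially the same route as the paper. Both arguments split $W$ into constant, first-harmonic and $|n|\ge2$ modes, reduce the nonlocal term to $2\pi^2\tr(AB)$ via the moment identities \eqref{eq:moments}, complete the square, and apply Poincar\'e on frequencies $|n|\ge2$ for coercivity; your shortcut $\int_0^{2\pi}U(s)\inner{x(s)}{V(t)}\dd s=\pi A\,V(t)$, which builds the frequency selection in $s$ into the definition of $A$, and your explicit density step from $\mathcal H^2$ to $\mathcal H^1$ are only minor presentational refinements.
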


\begin{proof}
Only the first Fourier modes contribute to the nonlocal term in
\eqref{eq:Hopf-Hessian}. Indeed, put
\[
 I(U,V)=\int_0^{2\pi}\int_0^{2\pi}
 \inner{y(t)}{U(s)}\inner{x(s)}{V(t)}\dd s\dd t.
\]
If $U=U_0$ is constant, integration in $s$ and $\int x(s)\dd s=0$ give
$ I(U_0,V)=0$; similarly $ I(U,V_0)=0$.  If
$U=U_{\ge2}$, then for each fixed $t$ the function
$s\mapsto\inner{y(t)}{U_{\ge2}(s)}$ contains only frequencies $|n|\ge2$, whereas
$s\mapsto\inner{x(s)}{V(t)}$ has frequency $|n|=1$.  Their integral therefore vanishes. Interchanging $s$ and $t$ shows that every term containing $V_{\ge2}$ also vanishes.  Hence
\[
  I(U,V)= I(Ax,By).
\]

Now we compute this remaining term.  Since $\inner{y}{Ax}=\inner{A^Ty}{x}$, the moment identity~\eqref{eq:moments} gives, after first integrating in $s$,
\begin{align*}
 \int_0^{2\pi}\inner{y}{Ax}\inner{x}{By}\dd s
 &=\inner{A^Ty}{\left(\int_0^{2\pi}x\otimes x\dd s\right)By}\\
 &=\pi\inner{A^Ty}{By}
 =\pi\inner{y}{AB y}.
\end{align*}
A second use of~\eqref{eq:moments} yields
\[
  I(Ax,By)
 =\pi\tr\left(AB\int_0^{2\pi}y\otimes y\dd t\right)
 =\pi^2\tr(AB).
\]
If $|A|_F:=\left(\tr(A^TA)\right)^{\frac{1}{2}}$ denotes the Frobenius norm, then
\begin{align*}
 \int_0^{2\pi}|(Ax)'|^2\dd s
 &=\int_0^{2\pi}\inner{Ax'}{Ax'}\dd s
 =\tr\left(A^TA\int_0^{2\pi}x'\otimes x'\dd s\right)
 =\pi|A|_F^2,
\end{align*}
because $\int x'\otimes x'\dd s=\pi\Id_{P_1}$.  The second component gives
$\int|(By)'|^2\dd t=\pi|B|_F^2$ in the same way.  Thus the first-mode contribution is
$$\pi^2(|A|_F^2+|B|_F^2+2\tr(AB))=\pi^2|B+A^T|_F^2,$$
so \eqref{eq:perfect-square} and~\eqref{eq:kernel} are proved.  Orthogonality to the constant kernel removes $U_0,V_0$, while orthogonality to every $(Cx,-C^Ty)$ gives $B=A^T$, this implies
\eqref{eq:orthogonal-complement}.

On the transverse first modes, 
$$\delta^{2} E_\Hopf((Ax,A^Ty))=4\pi^2|A|_F^2\quad \text{and}\quad
|(Ax,A^Ty)|_{H^1}^2=4\pi|A|_F^2.$$
On frequencies $|n|\ge2$, Poincar\'e's inequality gives $\int|f'|^2\ge4\int|f|^2$, and hence
$$\pi\int|f'|^2\ge(4\pi/5)|f|_{H^1}^2.$$
The Fourier pieces are orthogonal, which proves
\eqref{eq:coercivity}.
\end{proof}


The kernel in \eqref{eq:kernel} consists of constant normal fields and
mixed rotational fields. We will construct an explicit family of M\"obius transformations realizing
these velocities.

For $A\in\Hom(P_1,P_2)$, the skew-symmetric matrix
\[
 \begin{pmatrix}0&-A^T\\ A&0\end{pmatrix}
\]
generates a rotation mixing the two orthogonal planes. Its normal trace
along the Hopf pair is $(Ax,-A^Ty)$.
The constant normal fields are generated by gradients of height functions.
For $a=a_1+a_2\in P_1\oplus P_2$, set $f_a(z)=\inner{a}{z}$.
On the unit sphere,
\[
 \nabla f_a(z)=a-\inner{a}{z}z,\qquad
 \nabla^2f_a=-f_a g_0.
\]
Thus $\nabla f_a$ is a conformal vector field. Its normal trace along
the Hopf pair is $(a_2,a_1)$, since the curve-normal planes along $x$
and $y$ are $P_2$ and $P_1$, respectively.

Accordingly, for the uniquely decomposed kernel field
\[
 h=(U_0+Ax,V_0-A^Ty)\in\KH,
\]
define
\begin{equation}\label{eq:kernel-parameters}
 a(h)=U_0+V_0,\qquad
 \Omega(h)=\begin{pmatrix}0&-A^T\\ A&0\end{pmatrix},
\end{equation}
and
\begin{equation}\label{eq:kernel-conformal-field}
 X_h(z)=\Omega(h)z+\nabla f_{a(h)}(z)
       =\Omega(h)z+a(h)-\inner{a(h)}{z}z.
\end{equation}
This assignment is linear in $h$, and $\bigl(P_{P_2}X_h(x),P_{P_1}X_h(y)\bigr)=h.$
In particular, $a(h)$ is chosen to realize the constant part of $h$,
while $\Omega(h)$ realizes its first-harmonic part.

Every M\"obius transformation of $\Sph^3$ is a composition of a rotation in $\operatorname{SO}(4)$ and a standard conformal diffeomorphism $T_b(z)$ of $\Sph^3$ as in the following:
\begin{equation}\label{eq:explicit-T}
 T_b(z)=(1-|b|^2)\frac{z-b}{|z-b|^2}-b,
 \qquad z\in\Sph^3,
\end{equation}
where $|b|<1$.

We see $T_0=\Id$ and $T_{\tau b}$ is a path of sphere diffeomorphisms for $0\le\tau\le1$.
\begin{equation}\label{eq:T-initial-velocity}
 \left.\frac{\mathrm d}{\mathrm d\tau}\right|_{\tau=0}
 T_{\tau b}(z)
 =-2\bigl(b-\inner{b}{z}z\bigr)=-2\nabla f_b(z).
\end{equation}
Consequently, $T_{-\tau a/2}$ has initial velocity $\nabla f_a$.
This fixes the normalization of the conformal parameter in the following
construction.

\begin{proposition}\label{prop:kernel-orbit}
For $h\in\KH$ sufficiently close to zero, define
\begin{equation}\label{eq:explicit-Phi}
 \Phi_h=\exp\bigl(\Omega(h)\bigr)\circ T_{-a(h)/2}.
\end{equation}
Then $\Phi_h\in\Mob^+(\Sph^3)$, the map $(h,z)\mapsto\Phi_h(z)$
is smooth, and $\Phi_0=\Id$. Moreover,
\begin{equation}\label{eq:Phi-differential}
 \left.\frac{\mathrm d}{\mathrm d\tau}\right|_{\tau=0}
 \Phi_{\tau h}(z)=X_h(z),
\end{equation}
so the normal differential of this family along $\Hopf$ is the identity
on $\KH$. Thus $\KH$ is exactly the space of normal infinitesimal
M\"obius motions of the Hopf pair.
\end{proposition}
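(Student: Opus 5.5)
The plan is to verify the three assertions in turn: membership in \(\Mob^+(\Sph^3)\) together with smoothness, the value \(\Phi_0=\Id\), and the first-order expansion \eqref{eq:Phi-differential}. From the last of these, the statement about normal traces follows from the identity \(\bigl(P_{P_2}X_h(x),P_{P_1}X_h(y)\bigr)=h\) recorded after \eqref{eq:kernel-conformal-field}.

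\textbf{Membership and smoothness.} Since \(\Omega(h)\) is skew-symmetric and depends linearly on \(h\), the matrix \(\exp(\Omega(h))\) lies in \(\operatorname{SO}(4)\). It depends smoothly on \(h\), and it acts on \(\Sph^3\) as an orientation-preserving isometry. For \(T_b\) with \(|b|<1\), formula \eqref{eq:explicit-T} is the composition of an inversion-type map with a translation, so it is a M\"obius transformation; one checks directly that \(|T_b(z)|=1\) for \(|z|=1\). To see that it preserves orientation, note that \(b\mapsto T_b\) is a continuous path from \(T_0=\Id\) within the M\"obius group, and orientation is locally constant along such a path.

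For smoothness, the denominator satisfies \(|z-b|\ge 1-|b|>0\) on \(\Sph^3\). Hence \((b,z)\mapsto T_b(z)\) is smooth on \(\{|b|<1\}\times\Sph^3\). The parameter \(a(h)\) is linear in \(h\), so \(|a(h)/2|<1\) once \(h\) is small, and \((h,z)\mapsto\Phi_h(z)\) is smooth as a composition of smooth maps. At \(h=0\) we have \(\Omega(0)=0\) and \(a(0)=0\), so \(\Phi_0=\exp(0)\circ T_0=\Id\).

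\textbf{The derivative formula.} By the chain rule applied at \(\tau=0\), where both factors are the identity,
\[
\frac{\mathrm d}{\mathrm d\tau}\Big|_{\tau=0}\Phi_{\tau h}(z)
=\Omega(h)z+\frac{\mathrm d}{\mathrm d\tau}\Big|_{\tau=0}T_{-\tau a(h)/2}(z).
\]
By \eqref{eq:T-initial-velocity} with \(b=-a(h)/2\), the second term equals \(\nabla f_{a(h)}(z)\). Formula \eqref{eq:T-initial-velocity} itself I would verify by expanding \(T_{\tau b}(z)\) to first order, using \(|z-\tau b|^{-2}=1+2\tau\inner{b}{z}+O(\tau^2)\) on \(|z|=1\). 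This gives \(z+2\tau\inner{b}{z}z-2\tau b+O(\tau^2)\), as claimed. Adding the two terms yields \(X_h(z)\), which is \eqref{eq:Phi-differential}.

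\textbf{Identification of the normal differential.} It remains to compute the normal projections of \(X_h\) along the Hopf pair. Along \(x\), for \(x\in P_1\), write \(h=(U_0+Ax,V_0-A^Ty)\), where \(U_0\in P_2\) and \(V_0\in P_1\). Three facts give the computation:
\begin{itemize}
\item \(\Omega(h)x=Ax\in P_2\);
\item \(\inner{a}{x}x\in P_1\);
\item \(P_{P_2}a(h)=U_0\).
\end{itemize}
Hence \(P_{P_2}X_h(x)=U_0+Ax\). Symmetrically, \(\Omega(h)y=-A^Ty\in P_1\), so \(P_{P_1}X_h(y)=V_0-A^Ty\). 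The normal differential of the family is therefore the identity map on \(\KH\).

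By \Cref{prop:perfect-square}, every normal infinitesimal motion of the Hopf pair that lies in the kernel is of the form \(h\) above. Conversely, every infinitesimal M\"obius motion is a conformal field of the form \(\Omega z+\nabla f_a\) with \(\Omega\) skew. Its normal trace is computed the same way; the \(\operatorname{so}(2)\oplus\operatorname{so}(2)\) block contributes only tangential or zero parts, so the trace lies in \(\KH\). This gives the equality of the two spaces.

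The only real care needed is the orientation claim for \(T_b\) and the sign convention in \eqref{eq:T-initial-velocity}. I expect the latter to be the main obstacle, in the sense that it is a routine but error-prone expansion.
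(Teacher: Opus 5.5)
Your proof is correct. For the forward direction it follows the paper: the chain rule at $\tau=0$ together with \eqref{eq:T-initial-velocity} gives $\Phi_{\tau h}(z)=z+\tau X_h(z)+O(\tau^2)$. You also verify details the paper leaves implicit: $|T_b(z)|=1$, orientation of $T_b$ by continuity from $T_0=\Id$, smoothness from $|z-b|\ge 1-|b|$, the expansion behind \eqref{eq:T-initial-velocity}, and the normal-trace identity $\bigl(P_{P_2}X_h(x),P_{P_1}X_h(y)\bigr)=h$.

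You differ genuinely from the paper in the converse inclusion. The paper argues variationally. It removes the tangential part of a M\"obius velocity by reparametrization. Energy invariance along the M\"obius path then forces the second variation in the remaining normal velocity to vanish. Since $\delta^2E_\Hopf\ge 0$ by \eqref{eq:perfect-square}, a zero of this nonnegative form lies in its nullspace $\KH$; this step is used silently, along with criticality of $\Hopf$ to discard the second-order part of the graph path.

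You instead compute directly the normal trace of an arbitrary conformal field $\Omega z+\nabla f_a$:
\begin{itemize}
\item the off-diagonal block of $\Omega$ contributes $(Ax,-A^Ty)$;
\item the diagonal $\operatorname{so}(2)\oplus\operatorname{so}(2)$ blocks are tangent to, or vanish on, each circle;
\item $\nabla f_a$ contributes the constants $(a_2,a_1)$.
\end{itemize}

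Your route is elementary and independent of the Hessian computation, and it yields slightly more. The normal-trace map from the $10$-dimensional conformal algebra onto $\KH$ is surjective, and its kernel is exactly the two rotations of the Hopf circles within their own planes. This is consistent with $\dim\KH=8$. It does rely on the standard description of the conformal algebra as rotations plus gradients of linear functions. That description follows from the paper's decomposition of M\"obius maps into a rotation composed with some $T_b$.

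The paper's route needs no explicit form of the conformal fields and is shorter. It is less self-contained, because it leans on criticality of $\Hopf$ and on semidefiniteness of the Hessian.
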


\begin{proof}
The exponential in \eqref{eq:explicit-Phi} is the ordinary matrix
exponential and belongs to $\operatorname{SO}(4)$.
Linearity of $a(h)$ and $\Omega(h)$, together with
\eqref{eq:T-initial-velocity}, gives
\begin{equation}
 \Phi_{\tau h}(z)
 =z+\tau\bigl(\Omega(h)z+a(h)-\inner{a(h)}{z}z\bigr)
   +O(\tau^2).
\end{equation}
By combining \eqref{eq:kernel-conformal-field}, we have \eqref{eq:Phi-differential}, i.e. every kernel field is realized as a normal M\"obius velocity.

Conversely, let us eliminate the tangential velocity of a M\"obius motion
by reparametrization, then the energy invariance implies zero second variation in its remaining normal velocity, which therefore belongs to $\KH$.
\end{proof}


We now derive the linearized operator from the quadratic form.  For
$W=(U,V)$ and $\widetilde W=(\widetilde U,\widetilde V)$ define the polarization
\[
 B_\Hopf(W,\widetilde W)
 :=\frac12\Bigl(\delta^{2} E_\Hopf(W+\widetilde W)-\delta^{2} E_\Hopf(W)-\delta^{2} E_\Hopf(\widetilde W)\Bigr).
\]
Substitution of~\eqref{eq:Hopf-Hessian} and integrating by parts gives
\[
 B_\Hopf(W,\widetilde W)
 =\int\inner{(\LH W)_1}{\widetilde U}\dd s
 +\int\inner{(\LH W)_2}{\widetilde V}\dd t,
\]
where the self-adjoint operator is
\begin{equation}\label{eq:LH}
\begin{split}
    (\LH W)_1(s)&=-\pi U''(s)+\int_0^{2\pi}y(t)\inner{x(s)}{V(t)}\dd t,\\
 (\LH W)_2(t)&=-\pi V''(t)+\int_0^{2\pi}x(s)\inner{y(t)}{U(s)}\dd s.
\end{split}
\end{equation}
On $\mathcal H^0$ with domain $\mathcal H^2$, $\LH$ is a self-adjoint
operator. 
It also admits the weak realization
$$\LH:\mathcal H^1\to\mathcal H^{-1}$$ 
where the second derivatives in \eqref{eq:LH} are understood distributionally. Moreover,
\[
 \langle \LH W,\widetilde W\rangle=B_\Hopf(W,\widetilde W).
\]
Its kernel is $\KH$.
Since \(\mathcal K_H\) is finite-dimensional and consists of smooth fields, the \(L^2\)-orthogonal projections 
\begin{align}\label{eq:dual-projection}
    P_{\mathcal K_H},\quad P_{\perp}=I-P_{\mathcal K_H}
\end{align}
extend boundedly to \(\mathcal H^1\) and \(\mathcal H^{-1}\).

\section{Nonlinear estimate and renormalization lemma}\label{sec:local}
In this section, we derive a nonlinear estimate for the first variation near the Hopf link. Motivated by the renormalization lemma of Mondino and Nguyen \cite{MN14}, we then construct a local Möbius gauge in which any sufficiently nearby link is represented as a normal graph over \(\Hopf\) with graph field orthogonal to \(\KH\).


Fix $0<r_0<\pi/4$.  The normal exponential maps
\begin{align}
 \Psi_1(s,\xi)&=\exp_{x(s)}\xi,
 & (s,\xi)&\in\Sph^1\times B_{r_0}(0;P_2),\label{eq:Psi1}\\
 \Psi_2(t,\eta)&=\exp_{y(t)}\eta,
 & (t,\eta)&\in\Sph^1\times B_{r_0}(0;P_1),\label{eq:Psi2}
\end{align}
are diffeomorphisms onto disjoint tubular neighborhoods of the Hopf circles. For a small field $v=(U,V)$, set
\begin{align}
 \Graph_\Hopf(v)=(X_U,Y_V),\qquad
 X_U(s)=\Psi_1(s,U(s)),\quad Y_V(t)=\Psi_2(t,V(t)),\notag
\end{align}
and define
\begin{align}
 \mathcal E(v)=E(\Graph_\Hopf(v)),\qquad
 \langle\EL(v),\zeta\rangle=\delta\mathcal E(v)[\zeta].\notag
\end{align}
Here $v\in\mathcal H^2$ has small $C^1$ norm, and $\zeta\in\mathcal H^2$.
The notation $\delta\EL(v)[\zeta]$ denotes its linearization in the direction $\zeta$ and satisfies
\begin{align}
 \langle\delta\EL(v)[\zeta],\eta\rangle
 =\delta^2\mathcal E(v)[\zeta,\eta].\notag
\end{align}

The graph variation in the first component is
$$\partial_\xi\Psi_1(s,U(s))[\zeta_1(s)].$$
Its projection to the normal plane of the current curve is an isomorphism at $v=0$, and remains so for small
$|v|_{C^1}$.  The second component has the same property.  For $|v|_{C^1}$ small, the normal projection of the graph variations onto the normal bundles of the current curves is pointwise invertible. Hence, \Cref{prop:first-variation} gives
\begin{equation}\label{eq:EL-equivalence}
 \EL(v)=0\quad\Longleftrightarrow\quad
 \Graph_\Hopf(v)\text{ is weakly critical}.
\end{equation}

\begin{lemma}\label{lem:remainder}
There are $C,r_1>0$ such that, for $v,w\in\mathcal H^2$ with
$|v|_{C^1},|w|_{C^1}<r_1$, the linearization $\delta\EL(v)$ extends to a uniformly bounded map $\mathcal H^1\to\mathcal H^{-1}$ and
\begin{equation}\label{eq:Hessian-Lipschitz}
 |\delta\EL(v)-\delta\EL(w)|_{\Lin(\mathcal H^1,\mathcal H^{-1})}
 \le C|v-w|_{C^1}.
\end{equation}
Here $\Lin(X,Y)$ is the space of bounded linear maps with its operator norm.
Moreover, $\EL(0)=0$, $\delta\EL(0)=\LH$, and
\begin{equation}\label{eq:remainder}
 |\EL(v)-\LH v|_{H^{-1}}
 \le C|v|_{C^1}|v|_{H^1}.
\end{equation}
\end{lemma}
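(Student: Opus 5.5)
We plan to write $\mathcal E(v)$ explicitly as an integral over the fixed parameter torus and differentiate under the integral sign. Set $X_U(s)=\Psi_1(s,U(s))$ and $Y_V(t)=\Psi_2(t,V(t))$. Then
\[
 \mathcal E(v)=\iint F\bigl(s,t,U(s),U'(s),V(t),V'(t)\bigr)\dd s\dd t,
\]
with integrand
\[
 F=\frac{|\partial_sX_U|\,|\partial_tY_V|}{|X_U-Y_V|^2}.
\]
Since $\Psi_1,\Psi_2$ are smooth, $\partial_sX_U=\partial_s\Psi_1+\partial_\xi\Psi_1[U']$ is affine in $U'$ with coefficients smooth in $(s,U)$, and likewise for $Y_V$. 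For $|v|_{C^1}<r_1$ with $r_1$ small we have $|\partial_sX_U|\ge 1/2$, and the chordal distance satisfies $|X_U-Y_V|^2\ge 1$, because the tubes of radius $r_0<\pi/4$ stay well apart. Hence $F$ is a smooth function of $(s,t,U,P,V,Q)$ on a neighborhood of the compact set $\{|U|,|P|,|V|,|Q|\le r_1\}$. Differentiating twice, $\delta^2\mathcal E(v)[\zeta,\eta]$ is a sum of integrals of the form
\[
 \iint c_{\alpha\beta}\bigl(s,t,v,v'\bigr)\,D^\alpha\zeta\,D^\beta\eta\dd s\dd t,
 \qquad |\alpha|,|\beta|\le1,
\]
where each $D^\alpha\zeta$ is one of $\zeta_1,\zeta_1',\zeta_2,\zeta_2'$ evaluated at $s$ or $t$. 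The coefficients $c_{\alpha\beta}$ are uniformly bounded. Cauchy--Schwarz on the torus (with Fubini for the mixed $s$/$t$ terms) then bounds each term by $C|\zeta|_{H^1}|\eta|_{H^1}$. This gives the extension of $\delta\EL(v)$ to $\Lin(\mathcal H^1,\mathcal H^{-1})$ with a uniform bound. Note that no derivative of $v$ beyond first order appears, precisely because we never integrate by parts in this representation.

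For the Lipschitz estimate \eqref{eq:Hessian-Lipschitz}, we apply the mean value theorem to each coefficient $c_{\alpha\beta}$, which is smooth in its arguments on the compact region above:
\[
 |c_{\alpha\beta}(s,t,v,v')-c_{\alpha\beta}(s,t,w,w')|\le C|v-w|_{C^1}
\]
pointwise. Inserting this into the same Cauchy--Schwarz bound gives \eqref{eq:Hessian-Lipschitz}. The identities are then checked directly. $\EL(0)=0$ is the criticality of $\Hopf$ from \Cref{prop:Hopf-Hessian}. Also $\delta\EL(0)=\LH$: at $v=0$ the exponential graph variation agrees with the exponential variation used in \Cref{prop:Hopf-Hessian}, so $\delta^2\mathcal E(0)$ is the polarization $B_\Hopf$ of \eqref{eq:Hopf-Hessian}, which equals $\langle\LH\cdot,\cdot\rangle$ by \eqref{eq:LH}. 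The one point needing care is that the second variation of $\mathcal E$ at $0$ along the straight line $\tau v$ in the chart is exactly the Hessian computed there with $x_\tau=\exp(\tau U)$; this holds by definition of $\Psi_i$.

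For the remainder estimate \eqref{eq:remainder}, we write
\[
 \EL(v)-\LH v=\int_0^1\bigl(\delta\EL(\theta v)-\delta\EL(0)\bigr)[v]\dd\theta.
\]
The fundamental theorem of calculus is justified for $v\in\mathcal H^2$ because $\theta\mapsto\delta\mathcal E(\theta v)[\zeta]$ is $C^1$ with derivative $\delta^2\mathcal E(\theta v)[v,\zeta]$, by differentiation under the integral. Applying \eqref{eq:Hessian-Lipschitz} with $w=0$ yields
\[
 |\EL(v)-\LH v|_{H^{-1}}\le \int_0^1 C\theta|v|_{C^1}|v|_{H^1}\dd\theta\le C|v|_{C^1}|v|_{H^1}.
\]
The main obstacle we expect is making the structural claim on $\delta^2\mathcal E$ rigorous: that only first derivatives of $\zeta$, $\eta$ and $v$ enter, with smooth bounded coefficients. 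We would verify this by listing the chain-rule terms in $\partial_\tau^2F$ along $v+\tau\zeta+\sigma\eta$. Each $\tau$- or $\sigma$-derivative falls either on the $(U,V)$ slots, producing $\zeta_i,\eta_i$, or on the $(P,Q)$ slots, producing $\zeta_i',\eta_i'$. Smoothness of $|\cdot|$ away from $0$ and of $r^{-2}$ away from $0$ then gives the required bounds on the coefficients.
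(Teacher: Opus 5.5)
Your proposal is correct and follows essentially the same route as the paper: you write $\mathcal E$ as a torus integral of a smooth Lagrangian in $(U,U',V,V')$, bound $\partial_{\mathbf q}^2 f$ uniformly and apply Cauchy--Schwarz to get the $\mathcal H^1\times\mathcal H^1$ extension, use the third-derivative bound with the mean value theorem for \eqref{eq:Hessian-Lipschitz}, identify $\delta\EL(0)$ with $\LH$ through the exponential variation of \Cref{prop:Hopf-Hessian}, and integrate along $\tau v$ to obtain \eqref{eq:remainder}. One small slip: the bound $|X_U-Y_V|^2\ge1$ comes from $|U|,|V|<r_1$ with $r_1$ small, which gives spherical separation at least $\pi/2-2r_1$ and hence chordal squared distance at least $1$ once $r_1\le\pi/12$; it does not follow from $r_0<\pi/4$ alone, because tubes of radius close to $\pi/4$ nearly touch.
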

\begin{proof}
    The chain rule gives
\[
 X_U'=\partial_s\Psi_1(s,U)+\partial_U\Psi_1(s,U)[U'],\qquad
 Y_V'=\partial_t\Psi_2(t,V)+\partial_V\Psi_2(t,V)[V'] .
\]
Hence
\begin{equation}\label{eq:graph-Lagrangian}
 \mathcal E(v)=\iint f(s,t,U(s),U'(s),V(t),V'(t))\dd s\dd t,
\end{equation}
where
\begin{align}
 f(s,t,U,U',V,V')
 =\frac{|\partial_s\Psi_1(s,U)+\partial_U\Psi_1(s,U)[U'] |\,
          |\partial_t\Psi_2(t,V)+\partial_V\Psi_2(t,V)[V'] |}
        {|\Psi_1(s,U)-\Psi_2(t,V)|^2}.\notag
\end{align}
Let \(\mathbf q=(U,U',V,V')\). For $|v|_{C^1}<r_{1}$, the speeds and the separation of the two components are uniformly bounded away from zero. Hence \(D_\mathbf q^k f\) is uniformly bounded for \(k\le3\).

To display the differentiation, abbreviate the tuple of values and first
derivatives by
\[
 \mathbf z_v(s,t)=(U(s),U'(s),V(t),V'(t)).
\]
We define \(z_\zeta,z_\eta\) analogously for $\zeta,\eta\in \mathcal H^{2}$.
Twice differentiating the finite-dimensional integrand gives
\begin{equation}\label{eq:coefficient-Hessian}
 \langle\delta\EL(v)[\zeta],\eta\rangle
 =\iint \partial_{\mathbf q}^2f(s,t,\mathbf z_v)
          [\mathbf z_\zeta,\mathbf z_\eta]\dd s\dd t.
\end{equation}
Since \(\partial_{\mathbf q}^2f\) is uniformly bounded, each term in
$ D_{\mathbf q}^2f(s,t,z_v)[z_\zeta,z_\eta] $ is bounded by a constant times a product $a\,c$, where $a$ is one of
$$ |\zeta_1(s)|,\quad |\zeta_1'(s)|,\quad |\zeta_2(t)|,\quad |\zeta_2'(t)|, $$
and $c$ is the corresponding quantity associated with $\eta$. Hence, by Cauchy-Schwarz inequality
$$  \left| D^2\mathcal E(v)[\zeta,\eta] \right| \le C\iint_{\mathbb T^2}|z_\zeta(s,t)|\,|z_\eta(s,t)|\,ds\,dt\le C|z_\zeta|_{L^2(\mathbb T^2)} |z_\eta|_{L^2(\mathbb T^2)}.  $$
Since
$$ |z_\zeta|_{L^2(\mathbb T^2)}^2 = 2\pi|\zeta|_{H^1}^2, \qquad |z_\eta|_{L^2(\mathbb T^2)}^2 = 2\pi|\eta|_{H^1}^2, $$
we obtain
$ \left| D^2\mathcal E(v)[\zeta,\eta] \right| \le C|\zeta|_{H^1}|\eta|_{H^1}.  $
Thus \(D^2\mathcal E(v)\) extends uniquely to a bounded bilinear form on \(\mathcal H^1\times\mathcal H^1\).

Finally, the bound for the third derivatives of $f$ and the mean-value theorem
give
\[
 \sup_{s,t}\bigl|\partial_{\mathbf q}^2f(s,t,\mathbf z_v)
                -\partial_{\mathbf q}^2f(s,t,\mathbf z_w)\bigr|
 \le C|v-w|_{C^1}.
\]
Applying the same product estimates to this difference proves
\eqref{eq:Hessian-Lipschitz}.  At $v=0$ the chart Hessian is the normal Hopf Hessian, so
$\delta\EL(0)=\LH$.  Integration along $\tau v$ now gives
\[
 \EL(v)-\LH v
 =\int_0^1\bigl(\delta\EL(\tau v)-\delta\EL(0)\bigr)[v]\dd\tau,
\]
and \eqref{eq:Hessian-Lipschitz} proves \eqref{eq:remainder}.
\end{proof}

Write $\Psi_i^{-1}=(\pi_i,\nu_i)$.  For $Z=(Z_1,Z_2)$ close to $\Hopf$ in
$C^1$, let
\[
 \theta_i=\pi_i\circ Z_i,\qquad \eta_i=\nu_i\circ Z_i.
\]
Then $\theta_i$ is a degree-one circle diffeomorphism close to the identity.
The field of its normal-graph representative is
\begin{equation}\label{eq:graph-extraction}
 \ngraph(Z):=(\eta_1\circ\theta_1^{-1},\eta_2\circ\theta_2^{-1}).
\end{equation}

\begin{lemma}\label{lem:graph-extraction}
On a sufficiently small $C^1$ neighborhood of $\Hopf$, the functions
$\theta_i,\eta_i$ satisfy $\theta_i'>1/2$, after taking lifts close to the identity.
They define degree-one circle diffeomorphisms and
\[
 u_i=\eta_i\circ\theta_i^{-1},\qquad
 Z_i(r)=\Psi_i\bigl(\theta_i(r),u_i(\theta_i(r))\bigr).
\]
The assignment $Z\mapsto\ngraph(Z)=(u_1,u_2)$ is continuous in $C^1$, with
$\ngraph(\Hopf)=0$.  If $Z\in H^2$, then $\theta_i$, $\theta_i^{-1}$, and
$u_i$ belong to $H^2$.
\end{lemma}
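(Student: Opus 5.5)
The plan is to treat the statement as an inverse-function-type argument for the tubular coordinates $\Psi_i$, with estimates uniform on a small $C^1$ neighbourhood of $\Hopf$. Since $\Psi_1$ and $\Psi_2$ are smooth diffeomorphisms onto disjoint tubular neighbourhoods (radius $r_0<\pi/4$), the inverses $\Psi_i^{-1}=(\pi_i,\nu_i)$ are smooth on closed tubes. We first choose $\delta>0$ so small that $|Z_i-\Hopf_i|_{C^0}<\delta$ forces $Z_i(\Sph^1)$ to lie in a compact subset of the corresponding tube. Then $\theta_i=\pi_i\circ Z_i$ and $\eta_i=\nu_i\circ Z_i$ are well defined, and both are $C^1$ functions of $Z$. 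At $Z=\Hopf$ one has $\pi_1(x(s))=s$ and $\nu_1(x(s))=0$, and similarly for $y$. Hence $\theta_i=\mathrm{id}$ and $\eta_i=0$ there, which already gives $\ngraph(\Hopf)=0$.

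Next we prove the derivative bound. By the chain rule, $\theta_i'(r)=D\pi_i(Z_i(r))[Z_i'(r)]$. At the Hopf link this equals $1$. Since $D\pi_i$ is uniformly continuous on the compact tube, we get
\[
|\theta_i'-1|\le C\,|Z_i-\Hopf_i|_{C^1},
\]
so shrinking the neighbourhood yields $\theta_i'>1/2$. For the degree, take the continuous lift of $\theta_i$ with $\theta_i(0)$ close to $0$. Then $\theta_i(r+2\pi)-\theta_i(r)$ is a multiple of $2\pi$ depending continuously on $Z$, and it equals $2\pi$ at $\Hopf$. On a connected $C^1$-ball it therefore equals $2\pi$ throughout. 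So $\theta_i$ is a degree-one lift, and being strictly increasing, it is a $C^1$ circle diffeomorphism with $|\theta_i-\mathrm{id}|_{C^0}$ small. The reconstruction formula is then tautological: $\Psi_i(\theta_i,\eta_i)=Z_i$, so setting $u_i=\eta_i\circ\theta_i^{-1}$ gives $Z_i(r)=\Psi_i(\theta_i(r),u_i(\theta_i(r)))$.

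For continuity of $Z\mapsto\ngraph(Z)$ in $C^1$, we use that composition with a fixed smooth map is continuous $C^1\to C^1$. Inversion of circle diffeomorphisms with $\theta'>1/2$ is continuous in $C^1$, because $(\theta^{-1})'=1/\theta'\circ\theta^{-1}$. Composition $(\eta,\psi)\mapsto\eta\circ\psi$ is continuous $C^1\times C^1\to C^1$. Here we rely on uniform continuity of $\eta'$ together with the uniform $C^1$ bounds; this is the one point needing a short $\varepsilon/3$ argument, since composition is not Lipschitz in $C^1$.

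Finally, $H^2$ regularity. If $Z_i\in H^2$, then $Z_i'\in H^1\subset C^0$, and $\theta_i'=D\pi_i(Z_i)[Z_i']$ is a product of the $H^1$ functions $D\pi_i(Z_i)$ (smooth of $H^2$) and $Z_i'$. Since $H^1(\Sph^1)$ is an algebra, $\theta_i\in H^2$, and likewise $\eta_i\in H^2$. For the inverse, $(\theta_i^{-1})'=1/(\theta_i'\circ\theta_i^{-1})$, and $\theta_i'\circ\theta_i^{-1}\in H^1$ by the change of variables with the bi-Lipschitz $\theta_i^{-1}$. This is the same argument as in \Cref{lem:constant-speed}. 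Since $\theta_i'>1/2$, the reciprocal is in $H^1$, so $\theta_i^{-1}\in H^2$. Then $u_i'=(\eta_i'\circ\theta_i^{-1})\,(\theta_i^{-1})'$ is again a product of $H^1$ functions, so $u_i\in H^2$.

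I expect the main (mild) obstacle to be the bookkeeping of lifts and the degree argument, together with the non-Lipschitz continuity of composition in $C^1$. Everything else is chain rule plus the algebra property of $H^1$.
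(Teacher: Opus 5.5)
Your proposal is correct and follows essentially the same route as the paper: tubular coordinates that reduce to the identity at $\Hopf$, the bound $\theta_i'>1/2$ from smoothness of $\Psi_i^{-1}$, and $C^1$ continuity via the formula $(\theta^{-1})'=1/(\theta'\circ\theta^{-1})$ together with uniform continuity of the limiting derivative. The differences are cosmetic: you prove $H^2$ regularity at the level of first derivatives (the algebra property of $H^1$ and invariance of $H^1$ under bi-Lipschitz composition), whereas the paper writes out $\psi_i''$ and $u_i''$ explicitly and checks they lie in $L^2$; and you spell out the degree-one argument that the paper leaves implicit.
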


\begin{proof}
At $Z=\Hopf$, $\theta_i(r)=r$ and $\eta_i=0$.
Smoothness of the tubular coordinates gives the lower derivative bound and the
identity $\theta_i(r+2\pi)=\theta_i(r)+2\pi$ on a small $C^1$ neighborhood.
Thus each $\theta_i$ is a degree-one diffeomorphism.

For completeness, suppose $Z^{(n)}\to Z$ in $C^1$.
Smooth pointwise composition gives $\theta_i^{(n)}\to\theta_i$ and
$\eta_i^{(n)}\to\eta_i$ in $C^1$.
The inverse lifts are uniformly Lipschitz, with constant at most $2$, so their
uniform convergence follows from that of the original lifts.  The formula
\[
 ((\theta_i^{(n)})^{-1})'
 =\frac{1}{(\theta_i^{(n)})'\circ(\theta_i^{(n)})^{-1}}
\]
and uniform continuity of the limiting derivative give convergence in $C^1$.
Applying the chain rule to $u_i^{(n)}$ proves $u_i^{(n)}\to u_i$ in $C^1$.

If $Z\in H^2$, the ordinary Sobolev chain rule gives
$\theta_i,\eta_i\in H^2$.  Write $\psi_i=\theta_i^{-1}$.  Almost everywhere,
\[
 \psi_i''=-\frac{\theta_i''\circ\psi_i}{(\theta_i'\circ\psi_i)^3},
 \qquad
 u_i''=(\eta_i''\circ\psi_i)(\psi_i')^2
           +(\eta_i'\circ\psi_i)\psi_i''.
\]
The derivative bounds for $\theta_i$ and change of variables imply
$\psi_i''\in L^2$.  The first term in $u_i''$ lies in $L^2$ by the same change
of variables, and the second does because $\eta_i'$ and $\psi_i'$ are bounded.
This proves the $H^2$ assertions.
\end{proof}

Let \(P_{\mathcal K_H}\) denote the \(L^2\)-orthogonal projection onto the kernel of the Hopf Hessian. We define $M(Z)$ by
$$ M(Z):=P_{\mathcal K_H}\operatorname{ng}_H(Z). $$
For $h\in\KH$ sufficiently close to zero, let $\Phi_h$ be the
M\"obius transformation defined in \eqref{eq:explicit-Phi}.
Set
\[
 \mathcal X^1=C^1(\Sph^1,P_2)\oplus C^1(\Sph^1,P_1),
\]
and define, on sufficiently small zero neighborhoods in
$\KH\times\mathcal X^1$,
\[
 \mathcal F(h,w)
 =M\bigl(\Phi_h(\Graph_\Hopf(w))\bigr).
\]

Based on the above argument, the following lemma, motivated by the renormalization lemma in \cite{MN14}, provides a local M\"obius gauge near $\Hopf$: after a small M\"obius transformation and reparametrization, a nearby link can be written as a normal graph over $\Hopf$, with its graph field orthogonal to $\KH$.
\begin{lemma}\label{lem:slice}
For every sufficiently small $w\in\mathcal X^1$, there is a unique sufficiently
small $h(w)\in\KH$ such that
\begin{align}\label{5.11}
     v=\ngraph\bigl(\Phi_{h(w)}(\Graph_\Hopf(w))\bigr)
 \quad\text{satisfies}\quad P_{\KH}v=0.
\end{align}
Moreover, \(h(w)\) depends smoothly on \(w\), \(v\to0\) in \(C^1\) as \(w\to0\), and \(H^2\)-regularity is preserved.
\end{lemma}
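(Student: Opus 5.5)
Apply the implicit function theorem to $\mathcal F:\KH\times\mathcal X^1\to\KH$, $\mathcal F(h,w)=M(\Phi_h(\Graph_\Hopf(w)))$, near $(0,0)$. The solution $h(w)$ of $\mathcal F(h(w),w)=0$ then gives $v=\ngraph(\Phi_{h(w)}(\Graph_\Hopf(w)))$ with $P_{\KH}v=0$. Three things are needed: $\mathcal F$ is well defined and $C^1$ near $(0,0)$, $\mathcal F(0,0)=0$, and $\partial_h\mathcal F(0,0)=\Id_{\KH}$.

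\textbf{Well-definedness and regularity.} For $(h,w)$ small, $\Phi_h(\Graph_\Hopf(w))$ is $C^1$-close to $\Hopf$, since $(h,z)\mapsto\Phi_h(z)$ is smooth and $\Graph_\Hopf$ is continuous from $\mathcal X^1$ to $C^1$. So \Cref{lem:graph-extraction} applies. For smoothness I would avoid differentiating $\ngraph$ with the inverse $\theta_i^{-1}$, which loses a derivative in $C^1$. Instead, $P_{\KH}$ is an $L^2$ pairing against an orthonormal basis $k_\alpha$ of the smooth kernel. After the change of variables $s=\theta_i(r)$,
\[
\langle u_i,k_{\alpha,i}\rangle_{L^2}=\int_{\Sph^1}\langle\eta_i(r),k_{\alpha,i}(\theta_i(r))\rangle\,\theta_i'(r)\dd r .
\]
Here $\theta_i=\pi_i\circ Z_i$ and $\eta_i=\nu_i\circ Z_i$ with $Z=\Phi_h(\Graph_\Hopf(w))$, which is a smooth pointwise composition of $(h,w,w')$. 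The term $\theta_i'$ involves $w'$ only linearly through the chain rule. So the map $(h,w)\mapsto M$ is $C^1$ (indeed smooth) from $\KH\times\mathcal X^1$, since Nemytskii operators of smooth functions are smooth on $C^1$. This step removes the derivative loss and is where I expect the main care to be needed. In particular one must check that only $w,w'$ enter and never $w''$, which holds because the integrand contains $\theta_i'$ but no second derivatives.

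\textbf{Derivative in $h$.} At $w=0$, $\mathcal F(0,0)=M(\Hopf)=0$. Differentiate $\tau\mapsto\mathcal F(\tau h,0)$ at $\tau=0$. By \eqref{eq:Phi-differential} the curve $\Phi_{\tau h}\circ\Hopf$ has velocity $X_h$ along $\Hopf$. Differentiating $\ngraph$ at $\Hopf$ discards the tangential component, which is absorbed into $\theta_i$, and returns the normal component, since $d\nu_i$ at the zero section is the normal projection. The result is $(P_{P_2}X_h(x),P_{P_1}X_h(y))=h$. Hence $\partial_h\mathcal F(0,0)=P_{\KH}|_{\KH}=\Id_{\KH}$, which is invertible on the $8$-dimensional space $\KH$. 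The implicit function theorem then gives neighborhoods and a unique smooth $h(w)$ with $h(0)=0$.

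\textbf{Remaining claims.} Continuity of $h$ and of $\ngraph$ in $C^1$ (\Cref{lem:graph-extraction}) gives $v\to0$ in $C^1$ as $w\to0$. If $w\in H^2$, then $\Graph_\Hopf(w)\in H^2$, and composition with the smooth map $\Phi_{h(w)}$ keeps it in $H^2$. The $H^2$ part of \Cref{lem:graph-extraction} then gives $v\in H^2$.
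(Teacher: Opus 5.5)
Your proposal is correct and follows essentially the same route as the paper. There too $M(Z)$ is rewritten, via the change of variables $q=\theta_i(r)$, as $\sum_a m_a(Z)k_a$ with each $m_a$ an integral of smooth functions of $(Z_i,Z_i')$, which avoids the derivative loss. Likewise, $\mathrm d_h\mathcal F(0,0)=\Id_{\KH}$ follows from \eqref{eq:Phi-differential} and the fact that $\mathrm d\nu_i$ at the zero section is the normal projection, and the implicit function theorem together with \Cref{lem:graph-extraction} gives the remaining continuity and $H^2$ claims.
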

\begin{proof}
    Let \(k_1,\ldots,k_8\) be a smooth \(L^2\)-orthonormal basis of \(\mathcal K_H\) with $k_a=(k_{a,1},k_{a,2})$. Define
    \begin{align}\label{5.12}
        m_a(Z)=\sum_{i=1}^2\int_0^{2\pi} \inner{\eta_i(r)}{k_{a,i}(\theta_i(r))}\,\theta_i'(r)\dd r.
    \end{align}
From the definition of $ \Psi_{i}^{-1}$, changing variables $q=\theta_i(r)$ in the formula for $m_a$ yields
\[
 m_a(Z)=\sum_{i=1}^2\int_0^{2\pi}
       \inner{\eta_i(\theta_i^{-1}(q))}{k_{a,i}(q)}\dd q
       =\langle\ngraph(Z),k_a\rangle_{L^2}.
\]
Thus, we have
$$ M(Z) = \sum_{a=1}^8m_a(Z)k_a.$$
Indeed, we can define
$$ f_{a,i}(z,p) := \langle\nu_i(z),k_{a,i}(\pi_i(z))\rangle\,d\pi_i|_z[p], $$
which is smooth in \((z,p)\).  
Thus 
$$m_{a}(Z)=\sum_{i=1}^2\int_0^{2\pi}
       f_{a,i}(Z_{i}(r),Z_{i}'(r))\dd r$$
is smooth in the \(C^1\) topology, which means $M$ is smooth in the \(C^1\) topology.

For a \(C^1\) sphere-tangent field \(\zeta\) along \(H\), we now compute \(d M(H)[\zeta]\). At \(Z=H\),
$$ \eta_i=0,\qquad \theta_i(r)=r,\qquad \theta_i'=1. $$
In differentiating \eqref{5.12}, the terms differentiating \(k_{a,i}(\theta_i)\) and \(\theta_i'\) therefore vanish. The remaining term is
$$d m_a(H)[\zeta] = \sum_{i=1}^2 \int_0^{2\pi} \left\langle d\nu_i|_{H_i(r)}[\zeta_i(r)],k_{a,i}(r) \right\rangle dr. $$
The differentials of the inverse tubular coordinates give
$$ d M(H)[\zeta] = P_{\mathcal K_H}(P_{P_2}\zeta_1,P_{P_1}\zeta_2). $$
The graph map and the finite-dimensional M\"obius action are smooth in the \(C^1\) topology. Hence \(\mathcal F\) is smooth near \((0,0)\), and \(\mathcal F(0,0)=0\). By \Cref{prop:kernel-orbit}, for every $k\in\KH$,
\[
 \mathrm d_h\mathcal F(0,0)[k]
 =\mathrm dM_\Hopf[(X_k\circ x,X_k\circ y)]
 =P_{\KH}k
 =k.
\]
Thus $\mathrm d_h\mathcal F(0,0)$ is the identity on $\KH$.
The implicit-function theorem gives a unique smooth small
parameter $h=h(w)$, with $h(0)=0$ and
$\mathcal F(h(w),w)=0$. This proves $\eqref{5.11}$ and the asserted local uniqueness of the parameter. The remaining continuity and regularity statements follow from \Cref{lem:graph-extraction}.
\end{proof}

\section{Proof of the critical-value gap}\label{sec:gap}
In this section, we establish a local gap theorem near the Hopf orbit. Combining this local rigidity result with the compactness theorem in \Cref{prop:threshold}, we then prove the main theorem.
\begin{theorem}\label{thm:local}
There is a $C^{1}$ neighborhood of the Hopf orbit such that
every weakly critical pair in this neighborhood belongs to ${\rm G}\cdot\Hopf$.
\end{theorem}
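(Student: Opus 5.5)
The plan is a standard slice-plus-coercivity argument. Let $\Gamma$ be weakly critical and $C^1$-close to some $g\cdot\Hopf$. Applying $g^{-1}$ and the reparametrization invariance of \Cref{lem:constant-speed}, we may assume $\Gamma$ is $C^1$-close to $\Hopf$ itself. By \Cref{lem:graph-extraction}, after separate reparametrizations, $\Gamma=\Graph_\Hopf(w)$ with $w=\ngraph(\Gamma)$ small in $\mathcal X^1$ and $w\in\mathcal H^2$. \Cref{lem:slice} then provides a small $h=h(w)\in\KH$ such that
\[
v=\ngraph\bigl(\Phi_h(\Graph_\Hopf(w))\bigr)
\]
satisfies $P_{\KH}v=0$, with $v\in\mathcal H^2$ and $|v|_{C^1}$ as small as we like once the neighborhood is small. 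Since $\Phi_h$ is a fixed Möbius map and weak criticality is preserved by Möbius maps and reparametrizations (\Cref{lem:constant-speed}), $\Graph_\Hopf(v)$ is weakly critical. By \eqref{eq:EL-equivalence}, $\EL(v)=0$.

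Next, test $\EL(v)=0$ against $v$ itself. Since $v\in\KH^\perp\cap\mathcal H^1$, the coercivity \eqref{eq:coercivity} gives
\[
\frac{4\pi}{5}|v|_{H^1}^2\le\delta^2E_\Hopf(v)=\langle\LH v,v\rangle=\langle\LH v-\EL(v),v\rangle.
\]
By \eqref{eq:remainder}, the right-hand side is at most
\[
|\EL(v)-\LH v|_{H^{-1}}|v|_{H^1}\le C|v|_{C^1}|v|_{H^1}^2.
\]
So if the neighborhood is small enough that $C|v|_{C^1}<4\pi/5$, we get $|v|_{H^1}=0$, hence $v=0$. Then
\[
\Phi_h(\Graph_\Hopf(w))
\]
is a reparametrization of $\Hopf$, so $\Gamma$ lies in ${\rm G}\cdot\Hopf$. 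Orientation reversals are allowed in $\Diff^2$, and $\Phi_h^{-1}$ composed with $g$ is orientation-preserving Möbius.

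The main obstacle is the first reduction. We must make sure that "$C^1$ neighborhood of the Hopf orbit" really lets us pass uniformly to a small $w$, so that the smallness constants in \Cref{lem:slice} and \Cref{lem:remainder} are independent of the group element $g$. Reparametrizations can be arbitrary, so closeness should be measured after optimal reparametrization. A Möbius element can be noncompact, but it acts by diffeomorphisms. The cleanest route is to define the neighborhood as
\[
{\rm G}\cdot\{\Graph_\Hopf(w):|w|_{C^1}<\delta\}
\]
up to reparametrization. \Cref{lem:graph-extraction} shows this contains a genuine $C^1$ neighborhood of $\Hopf$, and ${\rm G}$-invariance of both the set and weak criticality handles the orbit. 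A secondary check is that the $H^1$ pairing $\langle\EL(v),v\rangle$ is legitimate for $v\in\mathcal H^2$; this follows from \Cref{lem:remainder} and the $H^1$-extension of the first variation in \Cref{prop:first-variation}.
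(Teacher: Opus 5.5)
Your proposal is correct and is essentially the paper's own argument. You reparametrize via \Cref{lem:graph-extraction}, use the M\"obius slice of \Cref{lem:slice} to put the graph field $v$ in $\KH^\perp$, and play the coercivity \eqref{eq:coercivity} against the remainder bound \eqref{eq:remainder} to force $v=0$, handling the orbit (as in the paper's final sentence) by taking the neighborhood to be the union of ${\rm G}$-translates of a $C^1$ neighborhood of $\Hopf$; the only point left implicit is that $|v|_{C^1}$ must also be below the threshold $r_1$ of \Cref{lem:remainder}, which the paper arranges explicitly with $r_\ast<r_1$.
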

\begin{proof}
By \Cref{lem:graph-extraction}, a pair $\Gamma\in\sA$ sufficiently close to
$\Hopf$ in $C^1$ can be separately reparametrized as $\Graph_\Hopf(w)$, with
$w\in\mathcal H^2$ small in $C^1$.

Apply \Cref{lem:slice} to obtain a M\"obius-equivalent graph
$\Graph_\Hopf(v)$ with $v\perp_{L^2}\KH$.
The continuity statements in the two lemmas allow us to shrink the original
neighborhood so that $|v|_{C^1}<r_{\ast}$, where
$r_{\ast}<r_1$ and $Cr_{\ast}<4\pi/5$ for the constants in \Cref{lem:remainder}.

Weak criticality is preserved by both operations from
\Cref{lem:constant-speed}. We write 
$$\mathcal R(v)=\EL(v)-\LH v,$$
then Lemma \ref{lem:remainder} implies $\LH v=-\mathcal R(v)$.
Pairing with $v$ gives
\begin{align}
 \frac{4\pi}{5}|v|_{H^1}^2
 \le \delta^{2} E_\Hopf(v)
 =-\langle\mathcal R(v),v\rangle
 \le Cr_{\ast}|v|_{H^1}^2.
\end{align}
Since $Cr_*<4\pi/5$, one has $v=0$.  Undoing the transformations proves
$\Gamma\in{\rm G}\cdot\Hopf$.
Finally, each fixed element of ${\rm G}$ acts by a $C^1$ homeomorphism and preserves
weak criticality, which proves the statement on the union of translates.
\end{proof}


\begin{proof}[Proof of \Cref{thm:main}]
Suppose that no positive gap exists.  Then, there is a sequence of non-split,
weakly critical pairs $\Gamma_n\in\sA\setminus({\rm G}\cdot\Hopf)$ such that
\[
 2\pi^2\le E(\Gamma_n)\le2\pi^2+\frac1n.
\]
By \Cref{prop:threshold}, a subsequence converges smoothly to $\Hopf$ after
common M\"obius transformations and separate reparametrizations.  For large $n$
the resulting pairs lie in the neighborhood of \Cref{thm:local}, and hence
belong to ${\rm G}\cdot\Hopf$.  Invariance of this orbit gives a
contradiction.
\end{proof}

\end{document}